\documentclass[12pt,twoside]{amsart}
\usepackage{amssymb,amsmath,amsthm, amscd, enumerate, mathrsfs}
\usepackage[all]{xy}
\usepackage{comment}

\title{On generic vanishing for pluricanonical bundles}
\author{Takahiro Shibata} 
\date{}
\keywords{generic vanishing, pluricanonical bundles, log canonical, 
cohomology support loci, GV-sheaves}
\subjclass[2010]{Primary 14F17, Secondary 14E30}
\address{Department of Mathematics, Graduate School of Science, 
Kyoto University, Kyoto 606-8502, Japan}
\email{tshibata@math.kyoto-u.ac.jp}

\newcommand{\Exc}[0]{{\operatorname{Exc}}}
\newcommand{\Supp}[0]{{\operatorname{Supp}}}
\DeclareMathOperator{\Gr}{Gr}
\DeclareMathOperator{\Pic}{Pic}
\DeclareMathOperator{\Spec}{Spec}
\DeclareMathOperator{\alb}{alb}
\DeclareMathOperator{\rank}{rank}
\DeclareMathOperator{\Coh}{Coh}
\DeclareMathOperator{\Ker}{Ker}
\DeclareMathOperator{\codim}{codim}
\DeclareMathOperator{\id}{id}

\DeclareMathOperator{\red}{red}
\DeclareMathOperator{\ord}{ord}
\DeclareMathOperator{\tor}{tor}

\newtheorem{thm}{Theorem}[section]
\newtheorem{lem}[thm]{Lemma}
\newtheorem{cor}[thm]{Corollary}
\newtheorem{prop}[thm]{Proposition}

\newtheorem{ques}[thm]{Question}

\theoremstyle{definition}

\newtheorem{rem}[thm]{Remark}
\newtheorem{ex}[thm]{Example}
\newtheorem*{ack}{Acknowledgments} 
\newtheorem{say}[thm]{}
\newtheorem{step}{Step}
\begin{document}

\begin{abstract}
We study cohomology support loci and higher direct images of (log) pluricanonical 
bundles of smooth projective varieties or log canonical pairs. 
We prove that the 0-th cohomology support loci of log pluricanonical bundles 
are finite unions of torsion translates of subtori,  
and we give a generalization of the generic vanishing theorem 
to log canonical pairs. 
We also construct an example of morphism from a smooth projective variety 
to an abelian variety such that a higher direct image of a pluricanonical bundle 
to the abelian variety is not a GV-sheaf. 
\end{abstract}

\maketitle

\tableofcontents 

\section{Introduction}\label{sec1}
Thoughout this paper, we always assume that all varieties are defined over the 
complex number field. 
Let $X$ be a smooth projective variety and $\Delta$ be a simple normal 
crossing divisor on $X$. 
In this paper, we prove some results of generic vanishing theory for $K_X+ 
\Delta$ or, more generally, $m(K_X+\Delta)$ for any positive integer $m$. 

In Section \ref{sec3}, we prove some results about the structure of  cohomology support loci for a log canonical pair $(X, \Delta)$ 
(for the definition of cohomology support loci, see \ref{say2.1}).  
Originally, Simpson proved the following theorem in \cite{Sim}: 

\begin{thm}[Simpson]\label{thm_Sim}
Let $X$ be a smooth projective variety. 
Then the cohomology support locus 
$$S^i_j(K_X) = 
\{ \xi \in \Pic ^0(X) \mid h^i(X, \mathcal O_X(K_X) \otimes \xi) \geq j \}$$
is a finite union of torsion translates of abelian subvarieties of $\Pic^0(X)$ 
for any $i \geq 0$ and $j \geq 1$. 
\end{thm}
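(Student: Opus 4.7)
The plan is to reduce the analysis to the Albanese variety and exploit Fourier--Mukai techniques there. Let $a : X \to A := \mathrm{Alb}(X)$ be the Albanese morphism; its pullback gives an isomorphism $a^* : \Pic^0(A) \xrightarrow{\sim} \Pic^0(X)$, so $S^i_j(K_X)$ may be regarded as a subset of $\widehat{A} := \Pic^0(A)$. By Koll\'ar's splitting $Ra_*\omega_X \cong \bigoplus_j R^j a_* \omega_X[-j]$ combined with the projection formula, the cohomology of $\omega_X \otimes a^*P$ decomposes as
\[
H^i(X, \omega_X \otimes a^*P) \;\cong\; \bigoplus_{j}\, H^{i-j}\!\bigl(A,\, R^j a_* \omega_X \otimes P\bigr),
\]
so the problem reduces to describing the cohomology support loci of the coherent sheaves $R^j a_* \omega_X$ on $A$.

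The key input I would invoke is the Chen--Jiang decomposition theorem, a refinement of Hacon's GV-theorem, which gives
\[
R^j a_* \omega_X \;\cong\; \bigoplus_\alpha \phi_\alpha^* F_\alpha \otimes P_\alpha,
\]
with $\phi_\alpha : A \to B_\alpha$ surjective homomorphisms of abelian varieties, $F_\alpha$ M-regular sheaves on $B_\alpha$, and $P_\alpha \in \Pic^0(A)$ torsion. Given this, M-regularity of $F_\alpha$ and standard base change imply that each cohomology support locus of the summand $\phi_\alpha^* F_\alpha \otimes P_\alpha$ is a torsion translate of the subtorus $\widehat{\phi}_\alpha(\widehat{B}_\alpha) \subset \widehat{A}$. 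Assembling the contributions across $\alpha$ and over $j$ then produces the finite union of torsion translates of abelian subvarieties claimed by the theorem.

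The main obstacle is invoking (or establishing) the Chen--Jiang decomposition. Hacon's GV-property of $R^j a_* \omega_X$ follows from standard vanishing and Fourier--Mukai arguments, but the torsion nature of the twists $P_\alpha$ --- which is precisely the refinement distinguishing Simpson's theorem from Green--Lazarsfeld's --- requires a Hodge-theoretic arithmetic input, essentially showing that the jumping loci are defined over $\overline{\mathbb Q}$ in the character variety. A more direct route, closer to the proof in \cite{Sim}, would be to first show via semicontinuity that $S^i_j(K_X)$ is an analytic subvariety of $\Pic^0(X)$, next use the Green--Lazarsfeld tangent-space computation (analyzing cup product maps on Hodge groups) to conclude that each irreducible component is a translate of an abelian subvariety, and finally invoke rigidity of polarizable variations of Hodge structure to force the translate to pass through a torsion point.
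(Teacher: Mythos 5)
First, note that the paper does not actually prove this statement: Theorem \ref{thm_Sim} is quoted from \cite{Sim}, and the paper's own machinery for results of this type (the proof of Theorem \ref{thm3.4}, which contains Theorem \ref{thm_Sim} as the case $\Delta=0$) runs through unitary local systems --- Deligne canonical extensions, Timmerscheidt's $E_1$-degeneration (Theorem \ref{thm3.1}), and Budur's structure theorem (Theorems \ref{thm3.2}--\ref{thm3.3}) --- rather than through Fourier--Mukai theory on the Albanese variety. So your route is genuinely different from anything in the paper.

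However, your primary argument has a real gap, and it is essentially a circularity. The Chen--Jiang decomposition of $R^j a_*\omega_X$ (in the generality you need, i.e.\ for all higher direct images under the Albanese map) is proved by Pareschi--Popa--Schnell precisely by \emph{using} the fact that the cohomology jump loci of the relevant Hodge-module pieces are torsion translates of subtori --- that is, Simpson's theorem and its extension by Wang are inputs to the decomposition, not consequences of it. Invoking it here assumes the hardest part of what is to be proved, namely the torsion property of the translates; you acknowledge this (``the main obstacle is invoking (or establishing) the Chen--Jiang decomposition''), but the proof is not complete without supplying that input. There is also a secondary issue: for the loci $S^i_j(K_X)$ with $j\geq 2$ you need the full multiplicity stratification of the jump loci of the sheaves $F_\alpha$, and M-regularity of $F_\alpha$ alone does not force those strata to be torsion translates of subtori; this again has to come from the Hodge-theoretic structure theorem. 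Your fallback sketch (semicontinuity, the Green--Lazarsfeld tangent space/cup-product analysis to get translates of subtori, then an arithmetic argument for torsion) is the historically correct outline of Simpson's proof, but the step you attribute to ``rigidity of polarizable variations of Hodge structure'' is not the actual mechanism --- rank-one unitary local systems are not rigid; Simpson's torsion statement rests on comparing the Betti and de Rham realizations of the jump loci (definability over $\mathbb{Q}$ versus $\overline{\mathbb{Q}}$) together with a Gelfond--Schneider-type transcendence argument. As written, the decisive arithmetic step is asserted rather than proved in both of your routes.
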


In \cite{ClHa}, a generalization of Theorem \ref{thm_Sim} for Kawamata log terminal 
pairs is discussed (see \cite[Theorem 8.3]{ClHa}). 
We will generalize Theorem \ref{thm_Sim} for log canonical pairs: 

\begin{thm}[= Theorem {\ref{thm3.4}}]\label{thm1.1}
Let $X$ be a smooth projective variety, 
$\Delta$ be a boundary $\mathbb Q$-divisor on $X$, 
that is, a $\mathbb Q$-divisor whose coefficients are in $[0,1]$, 
with simple normal crossing support, 
$f:X \to A$ be a morphism to an abelian variety, 
and $D$ be a Cartier divisor on $X$ 
such that $D \sim _{\mathbb Q} K_X + \Delta$. 
Then the cohomology support locus
$$S^i_j(D, f) = \{ \xi \in \Pic ^0(A) \mid h^i(X, \mathcal O_X(D) \otimes f^*\xi) \geq j \}$$
is a finite union of torsion translates 
of abelian subvarieties of $\Pic ^0(A)$ 
for any $i \geq 0$ and $j \geq 1$. 
\end{thm}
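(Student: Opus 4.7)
The plan is to reduce the $\mathbb{Q}$-coefficient statement to an integral one via a cyclic covering trick, and then invoke generic vanishing and torsion-translate structure results on the cover.

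Fix a positive integer $N$ such that $N\Delta$ is integral and $ND \sim N(K_X+\Delta)$. Then $\mathcal{O}_X(D-K_X)$ is an $N$-th root of $\mathcal{O}_X(N\Delta)$, and the associated cyclic cover, followed by a log resolution, yields a smooth projective variety $Y$, a generically finite morphism $\pi : Y \to X$, a reduced SNC divisor $\Gamma$ on $Y$ with $\pi^*D \sim K_Y+\Gamma$, and an induced morphism $g := f \circ \pi : Y \to A$. The Galois action of $\mathbb{Z}/N$ decomposes $\pi_*\mathcal{O}_Y(K_Y+\Gamma)$ into eigensheaves, one of which is $\mathcal{O}_X(D)$ (possibly after twisting by a torsion line bundle). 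In particular, for every $\xi \in \Pic^0(A)$ there is a split injection
$$H^i(X,\mathcal{O}_X(D) \otimes f^*\xi) \hookrightarrow H^i(Y,\mathcal{O}_Y(K_Y+\Gamma) \otimes g^*\xi),$$
so $S^i_j(D,f)$ equals, up to translation by a torsion point of $\Pic^0(A)$, the intersection of the analogous cohomology support locus on $Y$ with the $\xi$-part that feeds into the chosen eigensheaf.

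On the pair $(Y,\Gamma)$, with $\Gamma$ reduced SNC, the higher direct images $R^q g_*\mathcal{O}_Y(K_Y+\Gamma)$ are GV-sheaves on $A$: this is a logarithmic analog of Hacon's generic vanishing theorem, which one obtains either from Saito's theory of mixed Hodge modules, or by combining Kawamata--Viehweg vanishing with the Fourier--Mukai argument of Hacon. Using the Leray spectral sequence for $g$, the support loci $S^i_j(K_Y+\Gamma, g)$ are built out of the cohomology support loci of the GV-sheaves $R^q g_*\mathcal{O}_Y(K_Y+\Gamma)$ on $A$. The Simpson--Pareschi--Popa--Wang structure theorem then guarantees that every such locus is a finite union of torsion translates of abelian subvarieties of $\Pic^0(A)$; intersecting with the eigenpart from the previous step yields the stated description of $S^i_j(D,f)$.

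The main difficulty lies in the \emph{log canonical} range: components of $\Delta$ with coefficient $1$ are unramified in the cyclic cover, so the eigensheaf decomposition of $\pi_*\mathcal{O}_Y(K_Y+\Gamma)$ requires a more delicate local analysis around log canonical centers than in the KLT setting treated in \cite{ClHa}. One must verify that, even in the presence of such components, $\mathcal{O}_X(D)$ really appears as a direct summand, and one must keep careful track of how torsion points of $\Pic^0(A)$ are permuted by the Galois action of the cover so that torsion translates on $Y$ descend to torsion translates on $X$.
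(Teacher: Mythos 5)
Your first paragraph follows the same strategy as the paper's Step 1: take an $N$-th root of the fractional data and pass to a cyclic cover where the boundary becomes reduced. Two remarks on that step. First, the paper takes the cover only with respect to the fractional part $\{\Delta\}$ (setting $C=D-K_X-\lfloor\Delta\rfloor$), so the coefficient-one components never enter the ramification at all --- the ``delicate local analysis around log canonical centers'' you worry about in your last paragraph is sidestepped, not confronted. Second, the normalization of the cyclic cover is singular in general, and passing to a log resolution $\mu:Y'\to Y$ is not free: one must check that the relevant sheaf on $Y$ is recovered from the resolution, which the paper does via the Fujino--Kov\'acs vanishing theorem $R^i\mu_*\mathcal O_{Y'}(-\Delta_{Y'}^{=1})=0$. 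Your phrase ``followed by a log resolution'' hides this.

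The genuine gap is in your second paragraph, i.e.\ the endgame on $(Y,\Gamma)$. The GV property of $R^qg_*\mathcal O_Y(K_Y+\Gamma)$ only gives the codimension inequalities $\codim S^i\geq i$; it carries no information about the loci being torsion translates of abelian subvarieties, and there is no structure theorem of Simpson type that applies to an arbitrary GV-sheaf. To invoke such a theorem for these particular sheaves you would need a genuinely Hodge-theoretic input (Saito's or Wang's theory in a logarithmic setting), which you do not supply. Moreover, the Leray spectral sequence only yields a containment of the form $S^i(K_Y+\Gamma,g)\subset\bigcup_q S^{i-q}(R^qg_*\mathcal O_Y(K_Y+\Gamma))$; a containment in a finite union of torsion translates does not make the smaller closed set a finite union of torsion translates, and for the higher jump loci $S^i_j$ with $j\geq 2$ the spectral sequence gives essentially no control at all. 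The paper avoids all of this: for reduced SNC $\Gamma$ it quotes Budur's theorem on cohomology jump loci of unitary local systems together with Timmerscheidt's $E_1$-degeneration of the logarithmic Hodge--de Rham spectral sequence, which directly shows that $\{L\in\Pic^\tau(X)\mid h^q(X,\Omega^p_X(\log\Delta)\otimes L)\geq j\}$ is a finite union of torsion translates for all $p,q,j$; taking $p=\dim X$ gives $S^i_j(K_X+\Delta,\alb_X)$, and the universality of the Albanese morphism handles an arbitrary $f:X\to A$. That Hodge-theoretic degeneration is the essential ingredient missing from your argument.
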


We will reduce Theorem \ref{thm1.1} to the special case when $\Delta$ is a 
(reduced) simple normal crossing divisor. 
This case was treated by Kawamata \cite{Kaw} 
and was used in \cite{CKP} for the proof of Corollary \ref{thm_CKP} below. 
In \cite{Bud}, Budur proved a result for the cohomology support loci of unitary local systems, which includes it as a special case 
(see Theorem \ref{thm3.2} and Theorem \ref{thm3.3}). 
We will use Budur's result for the proof of Theorem \ref{thm1.1}. 

We can also prove a similar result for pluricanonical divisors:

\begin{thm}[= Theorem {\ref{thm3.7}}]\label{thm1.2}
Let $X$ be a normal projective variety, 
$\Delta$ be a boundary $\mathbb Q$-divisor on $X$ 
such that $K_X+\Delta$ is $\mathbb Q$-Cartier and $(X, \Delta)$ is 
log canonical, 
$f:X \to A$ be a morphism to an abelian variety, 
and $D$ be a Cartier divisor on $X$ 
such that $D \sim _{\mathbb Q} m(K_X + \Delta)$ for some positive integer $m$. 
Then the cohomology support locus
$$S^0_j(D, f) = \{ \xi \in \Pic ^0(A) \mid h^0(X, \mathcal O_X(D) \otimes f^*\xi) \geq j \}$$
is a finite union of torsion translates 
of abelian subvarieties of $\Pic ^0(A)$ 
for any $j \geq 1$. 
\end{thm}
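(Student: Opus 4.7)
The plan is to reduce Theorem~\ref{thm1.2} to Theorem~\ref{thm1.1} in two steps: a log resolution to pass to the smooth simple normal crossing (SNC) setting, then a cyclic covering construction to convert the $m$-pluricanonical case into the log canonical ($m=1$) case.

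For the first step, take a log resolution $\pi\colon Y \to X$ of $(X,\Delta)$. Log canonicity of $(X,\Delta)$ lets one write
$$K_Y + \Delta_Y = \pi^*(K_X + \Delta) + F,$$
where $\Delta_Y$ is the SNC boundary (strict transform of $\Delta$ plus the reduced exceptional divisor) and $F$ is an effective, $\pi$-exceptional $\mathbb{Q}$-divisor. Setting $D_Y := \pi^*D + \lfloor mF\rfloor$, which is Cartier on $Y$, I compute $D_Y \sim_\mathbb{Q} m(K_Y + \Delta'_Y)$ with $\Delta'_Y := \Delta_Y - \{mF\}/m$; since $\Delta_Y$ has coefficient $1$ along the reduced exceptional locus and $\{mF\}/m$ has coefficients in $[0, 1/m)$ supported there, $\Delta'_Y$ is still an SNC boundary. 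Because $\lfloor mF\rfloor$ is effective and $\pi$-exceptional and $X$ is normal, $\pi_*\mathcal{O}_Y(D_Y) = \mathcal{O}_X(D)$, so $S^0_j(D,f) = S^0_j(D_Y, f\circ\pi)$. Hence I may assume $X$ smooth and $\Delta$ SNC.

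For the second step, I use Viehweg's cyclic covering trick applied to the relation $D \sim_\mathbb{Q} m(K_X+\Delta)$. Choosing $N$ divisible enough so that $N\Delta$ and $ND$ are integral and $ND \sim Nm(K_X+\Delta)$ linearly, this yields a finite cover $p\colon Z \to X$ (formed from an \'etale cyclic part untwisting a torsion class in $\Pic(X)$ together with a degree-$m$ cyclic part branched along a section of a trivialized bundle) and, after a further log resolution, a smooth projective $Z$ carrying an SNC boundary $\Delta_Z$ and a Cartier divisor $D_Z$ with $D_Z \sim_\mathbb{Q} K_Z + \Delta_Z$. The cover is arranged so that $p_*\mathcal{O}_Z(D_Z) \cong \bigoplus_{\chi} \mathcal{O}_X(D)\otimes\eta_\chi$ for torsion line bundles $\eta_\chi \in \Pic(X)$ indexed by characters $\chi$ of $\mathrm{Gal}(Z/X)$, with $\eta_0 = \mathcal{O}_X$. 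Applying Theorem~\ref{thm1.1} to $(Z,\Delta_Z,D_Z, f\circ p)$ shows that $S^0_j(D_Z, f\circ p)$ is a finite union of torsion translates of abelian subvarieties of $\Pic^0(A)$; descending through the Galois decomposition -- identifying the torsion twists $\eta_\chi$ with shifts on $\Pic^0(A)$ whenever $\eta_\chi$ lies in $f^*\Pic^0(A)$ and handling the remaining characters separately -- yields the same structural conclusion for $S^0_j(D,f)$.

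The main obstacle is the second step: constructing the cover $p$ with an SNC boundary $\Delta_Z$ so that $(Z,\Delta_Z)$ remains log canonical (absorbing the ramification contributions into $\Delta_Z$ without exceeding coefficient $1$) and so that $D_Z \sim_\mathbb{Q} K_Z + \Delta_Z$ with $D_Z$ Cartier. A secondary subtle point is the Galois descent: showing that the trivial-character piece $S^0_j(D,f)$ inherits the torsion-translate structure of $S^0_j(D_Z, f\circ p)$, rather than merely being contained in such a union, which uses crucially that $p$ does not alter the target abelian variety $A$ and that the torsion characters $\chi$ produce only torsion translates in $\Pic^0(A)$.
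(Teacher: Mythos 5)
Your first step (log resolution, absorbing $\lfloor mF\rfloor$ into $D_Y$ and pushing forward) is sound and parallels what the paper does, but your second step has a genuine gap --- in fact two. First, the covering construction as described cannot lower the exponent $m$ to $1$. The class you trivialize, $N\bigl(D-m(K_X+\Delta)\bigr)\sim 0$, is a torsion class; the associated cyclic cover is \'etale in codimension one and its pushforward decomposes into torsion twists, but such a cover does nothing to convert $m(K_X+\Delta)$ into an adjoint divisor $K_Z+\Delta_Z$. To reduce the exponent by a covering trick you must branch along an effective member of (a multiple of) $|m(K_X+\Delta)+f^*\xi|$; such a member exists only for $\xi\in S^0_j(D,f)$, so the cover necessarily depends on the point $\xi$ (the theorem assumes no global effectivity), and even then the eigensheaf decomposition of $p_*\mathcal O_Z(D_Z)$ consists of the intermediate powers $\mathcal O_X(k(K_X+\Delta)+\cdots)$ for $k=1,\dots,m$, not of torsion twists $\mathcal O_X(D)\otimes\eta_\chi$ of a single sheaf. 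Second, even granting a decomposition $p_*\mathcal O_Z(D_Z)=\bigoplus_\chi\mathcal F_\chi$ with $\mathcal F_0=\mathcal O_X(D)$, the descent fails: $S^0_j(D_Z,f\circ p)=\{\eta\mid\sum_\chi h^0(\mathcal F_\chi\otimes f^*\eta)\geq j\}$ merely \emph{contains} $S^0_j(D,f)$, and a Zariski closed subset contained in a finite union of torsion translates need not itself be such a union; the trivial character cannot be isolated by closed conditions.

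The paper circumvents both problems with the Chen--Hacon pointwise trick, which requires no ramified cover at all. Fix $\xi\in S^0_j(D,f)$; it suffices to exhibit one torsion translate through $\xi$ inside $S^0_j(D,f)$, since countability of torsion translates plus Zariski closedness of the locus then forces finiteness. After a resolution arranging $\mu^*|D+f^*\xi|=|V|+F$ with $|V|$ free, one chooses a general $M\in|V|$ and an $m$-th root $\xi_0$ of $\xi$, so that $K_X+\Delta+f^*\xi_0\sim_{\mathbb Q}\frac{1}{m}(M+F)$; then $D_0:=D-F+(m-1)f^*\xi_0+G$, with $G=\bigl\lceil(\Delta-\frac{1}{m}F)^-\bigr\rceil$, satisfies $D_0\sim_{\mathbb Q}K_X+\Delta-\frac{1}{m}F+\frac{m-1}{m}M+G$, an adjoint divisor with simple normal crossing boundary on $X$ itself. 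Theorem \ref{thm1.1} gives a torsion translate $B+q$ with $\xi_0\in B+q\subset S^0_j(D_0,f)$, the effectivity of $F-G$ gives $S^0_j(D_0,f)+(m-1)\xi_0\subset S^0_j(D,f)$, and $B+q+(m-1)\xi_0=B+mq$ is the desired torsion translate through $\xi$. If you want to retain a covering approach you would have to rebuild it around this pointwise scheme and still resolve the eigensheaf descent; as written, the reduction from $m(K_X+\Delta)$ to $K_Z+\Delta_Z$ does not exist.
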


Theorem \ref{thm1.2} is one of the main results of this paper. 
Note that Theorem \ref{thm1.2} states the structure of cohomology 
support loci of only 0-th cohomology 
and that $(X, \Delta)$ is an arbitrary log canonical pair.  
The special case when $\Delta=0$ was proved by Chen and Hacon (see 
\cite[Theorem 3.2]{ChHa}). 
As a corollary, Theorem \ref{thm1.2} implies the following result 
by Campana, Koziarz and P\u aun: 

\begin{cor}[{\cite[Theorem 0.1]{CKP}}]\label{thm_CKP}
Let $X$ be a normal projective variety, 
$\Delta$ be a boundary $\mathbb Q$-divisor on $X$ 
such that $K_X+\Delta$ is $\mathbb Q$-Cartier and 
$(X, \Delta)$ is log canonical. 
Assume that there exists a numerically trivial Cartier divisor $\rho$ on $X$ 
such that  
$H^0(X, m(K_X+\Delta)+\rho) \neq 0$ for some positive integer $m$. 
Then $h^0(X,m'(K_X+\Delta)) \geq h^0(X, m(K_X+\Delta) + \rho) >0$ 
for some suitable multiple $m'$ of $m$. 
\end{cor}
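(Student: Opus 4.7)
The plan is to deduce the corollary from Theorem \ref{thm1.2} by recognizing the hypothesis as saying a point $\xi_0\in\Pic^0(A)$ lies in the nonvanishing locus $S^0_{j_0}(D,f)$ for a suitable Albanese-type morphism $f$, then using the structural theorem to slide $\xi_0$ within its irreducible component to a torsion element, and finally ``clearing'' the torsion twist by raising sections to an appropriate power. After replacing $(m,\rho)$ by $(cm,c\rho)$ for $c$ divisible by both the $\mathbb Q$-Cartier index of $K_X+\Delta$ and the order of $\Pic^\tau(X)/\Pic^0(X)$, we may assume that $m(K_X+\Delta)$ is Cartier and that $\rho$ represents a class in $\Pic^0(X)$; the inequality on global sections is preserved because, for any nonzero $s\in H^0(X,\mathcal O_X(m(K_X+\Delta)+\rho))$, the map $t\mapsto t\cdot s^{c-1}$ is an injection into $H^0(X,\mathcal O_X(cm(K_X+\Delta)+c\rho))$. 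Next choose a resolution $\pi\colon Y\to X$ with Albanese $\alpha\colon Y\to A$; since the general fiber of the birational morphism $\pi$ is a point, the rigidity lemma lets $\alpha$ factor through $\pi$ as $\alpha=f\circ\pi$ for some $f\colon X\to A$, and the isomorphism $\alpha^*\colon\Pic^0(A)\xrightarrow{\sim}\Pic^0(Y)$ together with the injectivity of $\pi^*\colon\Pic(X)\to\Pic(Y)$ (coming from $\pi_*\mathcal O_Y=\mathcal O_X$) produces a unique $\xi_0\in\Pic^0(A)$ with $f^*\xi_0=[\rho]$ in $\Pic(X)$.

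Setting $D=m(K_X+\Delta)$ and $j_0=h^0(X,\mathcal O_X(D+\rho))>0$, the hypothesis then says $\xi_0\in S^0_{j_0}(D,f)$. Theorem \ref{thm1.2} describes $S^0_{j_0}(D,f)$ as a finite union of torsion translates of abelian subvarieties, so $\xi_0$ lies in some component $T_0+\tau_0$ with $\tau_0$ torsion; since $0\in T_0$, the torsion point $\tau_0$ itself lies in $S^0_{j_0}(D,f)$, i.e.\ $h^0(X,\mathcal O_X(D)\otimes f^*\tau_0)\geq j_0$. Let $k$ be the order of $\tau_0$, so that $f^*(k\tau_0)$ is trivial; fixing any nonzero section $s_1$ of $\mathcal O_X(D)\otimes f^*\tau_0$, the map $t\mapsto t\cdot s_1^{k-1}$ provides an injection
$$H^0(X,\mathcal O_X(D)\otimes f^*\tau_0)\hookrightarrow H^0(X,\mathcal O_X(kD)),$$
so setting $m'=km$ yields $h^0(X,m'(K_X+\Delta))\geq j_0=h^0(X,m(K_X+\Delta)+\rho)$, as required.

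The main obstacle I anticipate is the bookkeeping in the first paragraph: arranging $\rho$ to be in $\Pic^0(X)$ and producing an explicit Albanese-type morphism $f\colon X\to A$ with $f^*\xi_0=[\rho]$ for some $\xi_0\in\Pic^0(A)$, despite $X$ being only normal. Once these identifications are in place, the remainder is essentially formal, combining the structural content of Theorem \ref{thm1.2} with the elementary observation that multiplication by a nonzero section turns a torsion twist into an untwisted section at the cost of raising the power.
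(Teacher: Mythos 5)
Your overall strategy is the same as the paper's (the paper's own proof is a three-line version of exactly this: use Theorem \ref{thm3.7} to replace $\rho$ by a torsion element lying in the same component of the cohomology support locus, then kill the torsion by passing to a multiple and multiplying by a power of a nonzero section). Steps concerning the reduction to $\rho\in\Pic^0$, the passage from $\xi_0\in T_0+\tau_0$ to the torsion point $\tau_0$, and the final injection $t\mapsto t\cdot s_1^{k-1}$ are all correct and are precisely what the paper does implicitly.

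However, there is a genuine error in your construction of the morphism $f\colon X\to A$. You claim that since the general fiber of the resolution $\pi\colon Y\to X$ is a point, the rigidity lemma forces the Albanese morphism $\alpha\colon Y\to A$ to factor through $\pi$. The rigidity lemma does not say this: it requires $\alpha$ to contract the fiber over the \emph{special} point (and then gives a factorization near that point); knowing that the general, zero-dimensional fibers are contracted tells you nothing about the positive-dimensional exceptional fibers. Indeed the conclusion itself is false: for $X$ the projective cone over an elliptic curve $E$ (a normal, log canonical variety), the resolution $Y$ is a $\mathbb P^1$-bundle over $E$, its Albanese morphism is the bundle projection $Y\to E$, and this maps the exceptional divisor isomorphically onto $E$, so it does not factor through $X$. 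The repair is standard but should be stated correctly: either (a) use the Albanese variety of $X$ itself, namely the dual of $\Pic^0(X)$ --- which is an abelian variety because $X$ is normal and proper in characteristic $0$ --- so that $f^*\colon\Pic^0(A)\to\Pic^0(X)$ is an isomorphism by construction and $[c\rho]$ automatically lies in its image; or (b) avoid constructing a morphism from $X$ at all by first transferring the whole problem to a log resolution $X'$ (as in Step 1 of the paper's proof of Theorem \ref{thm3.7}), where $h^0$ of the relevant sheaves is unchanged and the Albanese morphism of the smooth variety $X'$ is available. With either repair the rest of your argument goes through and recovers the paper's proof.
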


Therefore, we can see Theorem \ref{thm1.2} as a generalization of 
\cite[Theorem 0.1]{CKP}. 
We will also give an alternative proof of \cite[Corollary 3.2]{CKP} 
as an application of Theorem \ref{thm1.2} 
(see Proposition \ref{prop3.8} below). 

Section \ref{sec4} treats the higher direct images of pluricanonical bundles 
to abelian varieties. 
In this direction, Hacon first proved the following result in \cite{Hac}: 

\begin{thm}[Hacon]\label{thm_Hac}
Let $X$ be a smooth projective variety and $f: X \to A$ be a morphism 
to an abelian variety. 
Then the higher direct images $R^jf_*\omega_X$ are GV-sheaves for any $j$. 
\end{thm}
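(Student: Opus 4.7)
The plan is to verify the GV-sheaf condition for each $R^jf_*\omega_X$ via Hacon's cohomological criterion and then reduce the resulting vanishing to Kollár's classical vanishing theorem on the dual abelian variety. Let $g = \dim A$, and let $\mathcal{P}$ denote the Poincaré line bundle on $A \times \hat{A}$. Hacon's criterion asserts that a coherent sheaf $\mathcal{F}$ on $A$ is GV if and only if, for a sufficiently ample line bundle $L$ on $\hat{A}$, the locally free sheaf $E_L$ on $A$ produced as the (appropriately shifted) Fourier--Mukai transform of $L^{-1}$ satisfies $H^i(A, \mathcal{F} \otimes E_L) = 0$ for all $i > 0$. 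I would apply this with $\mathcal{F} = R^jf_*\omega_X$.

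The key auxiliary input is the Mukai identity $\psi^* E_L \cong L^{\oplus h^0(\hat{A}, L)}$ (up to a harmless torsion twist), where $\psi : \hat{A} \to A$ is the isogeny induced by $L$. Since $\psi$ is finite étale, the required vanishing on $A$ is equivalent to $H^i(\hat{A}, \psi^*(R^jf_*\omega_X) \otimes L) = 0$ for all $i > 0$. Now form the base change $Y := X \times_A \hat{A}$ with projection $h : Y \to \hat{A}$; étaleness of $\psi$ ensures that $Y$ is smooth projective, and flat base change yields $\psi^*(R^jf_*\omega_X) \cong R^jh_*\omega_Y$. The problem thus reduces to showing
$$H^i(\hat{A},\, R^jh_*\omega_Y \otimes L) = 0 \quad \text{for } i > 0,$$
which is precisely Kollár's vanishing theorem for higher direct images of the dualizing sheaf of a smooth projective variety twisted by an ample line bundle.

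The principal technical obstacle is Hacon's criterion itself: translating the definition of GV-sheaf---a codimension bound on the cohomology support loci $V^i(\mathcal{F}) \subset \Pic^0(A)$---into the derived Fourier--Mukai vanishing condition used above. This rests on Grothendieck--Verdier duality for the Fourier--Mukai functor $\Phi_\mathcal{P}$ together with Mukai's inversion theorem, and requires some care with indices and dualizing shifts to place $E_L$ in a single degree and identify it as locally free. Once that dictionary and the Mukai isogeny identity are in place, the reduction to Kollár vanishing via étale base change is short and essentially formal; no further positivity on $X$ itself is needed, which is fortunate because $f^*(\text{anything ample on }A)$ need not even be big on $X$.
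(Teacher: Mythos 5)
Your argument is correct and is essentially the route the paper takes: the paper cites this statement from Hacon but reproves (and generalizes) it as Proposition \ref{prop4.2}, whose proof is exactly your reduction --- Hacon's criterion (Theorem \ref{thm4.3}, stated there directly in the isogeny form $H^k(B,\phi^*\mathcal F\otimes L)=0$, so your Fourier--Mukai/Mukai-identity translation is handled for you), followed by \'etale base change along the isogeny and Koll\'ar-type vanishing (Ambro--Fujino in the log setting, which is Koll\'ar's theorem when $\Delta=0$). No substantive difference.
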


In the notations in Theorem \ref{thm_Hac}, a coherent sheaf $\mathcal F$ on 
$A$ is called a \textit{GV-sheaf} on $A$ if the cohomology support loci
$$ S^i(\mathcal F) = \{ \xi \in \Pic^0(A) \mid h^i(A, \mathcal F \otimes \xi) 
\neq 0 \}$$ satisfies $\codim_{\Pic^0(A)}S^i(\mathcal F) \geq i$ for every 
$i > 0$ (see also \ref{say2.3}). 
Theorem \ref{thm_Hac} recovers the original generic vanishing theorem of Green 
and Lazarsfeld in \cite{GrLa}. 
We will give some generalizations of Theorem \ref{thm_Hac} and the generic 
vanishing theorem of Green and Lazarsfeld to log canonical pairs. 
For the precise statements, 
see Proposition \ref{prop4.2} and Theorem \ref{thm3.5}, respectively.  

In \cite{PoSc}, Popa and Schnell gave a similar result for pluricanonical divisors: 

\begin{thm}[{\cite[Theorem 1.10]{PoSc}}]\label{thm_PoSc}
Let $X$ be a smooth projective variety 
and $f:X \to A$ be a morphism to an abelian variety.
Then the direct image $f_*(\omega_X^{\otimes m})$ 
is a GV-sheaf for any positive integer $m$. 
\end{thm}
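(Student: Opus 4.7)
The plan is to deduce the theorem from Hacon's theorem (Theorem \ref{thm_Hac}) by exhibiting $f_{*}(\omega_{X}^{\otimes m})$ as a direct summand of $g_{*}\omega_{Y}$ for a suitable smooth projective $Y$ and morphism $g\colon Y \to A$, after possibly passing to an \'etale base change of $A$. The key reduction is the cyclic covering trick: given a nonzero section $s \in H^{0}(X, \omega_{X}^{\otimes m})$, the normalized degree-$m$ cyclic cover $\pi_{0}\colon Y_{0} \to X$ associated to the $m$-th root of $\omega_{X}^{\otimes m}$ defined by $s$, followed by a log resolution $Y \to Y_{0}$, produces a smooth $Y$ such that $\omega_{X}^{\otimes m}$ appears as a direct summand of $\pi_{*}\omega_{Y}$ (the full decomposition being $\bigoplus_{i=1}^{m} \omega_{X}^{\otimes i}$ in the smooth branch-divisor case, with appropriate corrections from the resolution in general). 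Applying $f_{*}$ and invoking Theorem \ref{thm_Hac} for $f \circ \pi\colon Y \to A$ then shows that $f_{*}\omega_{X}^{\otimes m}$ is a direct summand of a GV-sheaf, and direct summands of GV-sheaves are again GV (directly from the codimension definition of the support loci).

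The catch is that a nonzero section of $\omega_{X}^{\otimes m}$ may not exist; a priori only sections of $\omega_{X}^{\otimes m} \otimes f^{*}\xi$ for some $\xi \in \Pic^{0}(A)$ are available. If no such section exists for any $\xi$, then $f_{*}(\omega_{X}^{\otimes m}) = 0$ and there is nothing to prove. Otherwise, Theorem \ref{thm1.2} (applied with $\Delta = 0$ and $D = m K_{X}$) tells us that the support locus $S^{0}$ is a nonempty finite union of torsion translates of subtori of $\Pic^{0}(A)$, and hence contains a torsion point $\xi_{0}$ of some order $N$. Forming the \'etale base change $X' := X \times_{A} A$ along the isogeny $\phi = [N]\colon A \to A$, the pullback of $\omega_{X}^{\otimes m} \otimes f^{*}\xi_{0}$ becomes $\omega_{X'}^{\otimes m}$ and acquires a nonzero section, so the cyclic covering construction goes through on $X'$ and yields that $f'_{*}\omega_{X'}^{\otimes m}$ is a GV-sheaf on $A$.

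To descend back to $X$, I would use that $\phi_{*}f'_{*}\omega_{X'}^{\otimes m}$ equals $f_{*}\pi'_{*}\omega_{X'}^{\otimes m}$ for the \'etale cover $\pi'\colon X' \to X$, and that $\pi'_{*}\omega_{X'}^{\otimes m} = \omega_{X}^{\otimes m} \otimes \pi'_{*}\mathcal{O}_{X'}$ decomposes as a direct sum indexed by a finite group of torsion line bundles on $X$, one of which is trivial. This exhibits $f_{*}\omega_{X}^{\otimes m}$ as a direct summand of $\phi_{*}f'_{*}\omega_{X'}^{\otimes m}$, and a direct check of cohomology support loci shows that push-forward by an isogeny on an abelian variety preserves the GV property (the loci pull back under the induced isogeny on $\Pic^{0}$ without changing codimension), yielding the theorem.

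\textbf{Main obstacle.} The principal technical point is maintaining control of the cyclic-covering decomposition across the log resolution when the divisor $\{s = 0\}$ is singular: the clean identity $\pi_{*}\omega_{Y} = \bigoplus_{i=1}^{m} \omega_{X}^{\otimes i}$ must be replaced by a sum involving round-downs of fractional divisors (the standard Esnault--Viehweg formalism), and verifying that $\omega_{X}^{\otimes m}$ genuinely persists as a direct summand in the resolved picture is the most delicate step. The secondary points --- nonemptiness of $S^{0}$ forcing a torsion point, and isogeny-invariance of the GV property --- are routine but should be checked.
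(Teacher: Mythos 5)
The paper does not reprove this statement; it is quoted from \cite{PoSc}, whose argument combines the effective vanishing theorem (Theorem \ref{thm_PoSc2}) with Hacon's criterion (Theorem \ref{thm4.3}) applied along multiplication isogenies. Your route through cyclic covers is genuinely different, but it contains a gap that is not merely ``delicate'': it is precisely the obstruction that forced Popa and Schnell to find a new argument. If $s\in H^0(X',\omega_{X'}^{\otimes m})$ has divisor $D$ and $\pi\colon Y\to X'$ is the resolved, normalized $m$-th root cover associated to $(\omega_{X'},D)$ (taken over a log resolution of $(X',D)$), the Esnault--Viehweg decomposition reads
$$\pi_*\omega_Y\;\cong\;\bigoplus_{i=0}^{m-1}\omega_{X'}^{\otimes(i+1)}\otimes\mathcal O_{X'}\left(-\left\lfloor\tfrac{i}{m}D\right\rfloor\right)$$
(with the round-down computed on the log resolution, i.e.\ a multiplier-ideal correction). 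The summand relevant to you is the one with $i=m-1$, namely $\omega_{X'}^{\otimes m}\otimes\mathcal O_{X'}\bigl(-\bigl\lfloor\tfrac{m-1}{m}D\bigr\rfloor\bigr)$, \emph{not} $\omega_{X'}^{\otimes m}$. The correction vanishes only when $D$ is reduced, and as soon as $D$ has a component of multiplicity at least $2$ --- unavoidable when $|mK_{X'}|$ has a multiple base component, no matter how generally $s$ is chosen --- this summand is a proper subsheaf of $\omega_{X'}^{\otimes m}$. What the covering trick actually proves is that $f'_*(\omega_{X'}^{\otimes m}\otimes\mathcal J)$ is a GV-sheaf for an asymptotic-multiplier-ideal-type ideal $\mathcal J\subseteq\mathcal O_{X'}$ (essentially the Chen--Hacon result), and since the GV property passes to direct summands but not to quotients or to sums of subsheaves, there is no way to upgrade this to $f'_*\omega_{X'}^{\otimes m}$ itself within your framework.

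A secondary gap: the dichotomy ``either some twist $\omega_X^{\otimes m}\otimes f^*\xi$ has a section or $f_*\omega_X^{\otimes m}=0$'' is unjustified at this stage. The implication $S^0(f_*\omega_X^{\otimes m})=\emptyset\Rightarrow f_*\omega_X^{\otimes m}=0$ follows from $S^i\subseteq S^0$ for GV-sheaves, i.e.\ from the statement being proved; a priori a nonzero coherent sheaf on an abelian variety can have no twisted global sections at all. By contrast, your descent steps are sound: pushforward along an isogeny preserves the GV property (the loci pull back under the dual isogeny without changing codimension), and $\mathcal O_X$ is indeed a direct summand of $\pi'_*\mathcal O_{X'}$ for the \'etale base change, so $f_*\omega_X^{\otimes m}$ is a summand of $\phi_*f'_*\omega_{X'}^{\otimes m}$. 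The use of Theorem \ref{thm1.2} to locate a torsion point of $S^0$ is also legitimate and non-circular. The failure is concentrated entirely in the covering step.
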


Theorem \ref{thm_PoSc} is a consequence of \cite[Theorem 1.7]{PoSc}. 
For the proof of Theorem \ref{thm_PoSc}, the following special case of  
\cite[Theorem 1.7]{PoSc} is sufficient: 

\begin{thm}[{\cite[Corollary 2.9]{PoSc}}]\label{thm_PoSc2}
Let X be a smooth projective variety and $f: X \to Y$ be a morphism to 
a projective variety $Y$ of dimension $n$. If $L$ is an ample and globelly 
generated line bundle on $Y$ and $m$ is a positive integer, then 
$$H^i(Y, f_*\omega_X^{\otimes m} \otimes L^{\otimes l}) = 0$$
for all $i > 0$ and $l \geq m(n+1)-n$. 
\end{thm}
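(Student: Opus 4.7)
The plan is to reduce the pluricanonical vanishing on $Y$ to Koll\'ar's vanishing for an ordinary canonical pushforward via an auxiliary branched covering adapted to $L$, and then to extract the sharp numerical bound $l \geq m(n+1) - n$ by tracking Castelnuovo--Mumford-type twists through the cover.

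First I would set up the covering. Since $L^{\otimes m(n+1)}$ is ample and globally generated, Bertini provides a general divisor $B \in |L^{\otimes m(n+1)}|$ whose pullback $f^{*}B$ has simple normal crossing support on $X$ (after a log resolution of $(X, f^{*}B)$ if necessary; this leaves $f_{*}\omega_{X}^{\otimes m}$ unchanged). Let $\pi\colon Z \to Y$ be the degree-$m$ cyclic cover branched along $B$, and let $\tilde{W}$ be a log resolution of $X \times_{Y} Z$, together with morphisms $\sigma\colon \tilde{W} \to X$ and $g\colon \tilde{W} \to Z$ satisfying $f\sigma = \pi g$. Since $\pi$ is finite and $L$ is ample, $\pi^{*}L$ is ample on $Z$, and Koll\'ar's vanishing theorem applied to the ordinary canonical pushforward $g_{*}\omega_{\tilde{W}}$ gives
\[
H^{i}(Y, \pi_{*}g_{*}\omega_{\tilde{W}} \otimes L^{\otimes k}) = 0, \quad i > 0,\; k \geq 1.
\]

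The heart of the argument is to identify $f_{*}\omega_{X}^{\otimes m}$, twisted by the precise power $L^{\otimes (m-1)(n+1)}$, as a direct summand of $\pi_{*}g_{*}\omega_{\tilde{W}}$ (or of a closely related canonical pushforward on a resolution). The Esnault--Viehweg analysis of $\sigma_{*}\omega_{\tilde{W}}$ decomposes this sheaf into $\mu_{m}$-eigensheaves of shape $\omega_{X} \otimes f^{*}L^{\otimes j}$ for $0 \leq j \leq m-1$; passing from these to a summand of the pluricanonical shape $\omega_{X}^{\otimes m} \otimes f^{*}L^{\otimes (m-1)(n+1)}$ is achieved by combining the cyclic cover with a Viehweg $m$-fold fibre product $X^{(m)} = X \times_{Y} \cdots \times_{Y} X$, using the diagonal embedding $X \hookrightarrow X^{(m)}$ to recover $\omega_{X}^{\otimes m}$ from the canonical sheaf of a resolution of $X^{(m)}$. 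Once the direct summand is in place, the Koll\'ar vanishing above gives $H^{i}(Y, f_{*}\omega_{X}^{\otimes m} \otimes L^{\otimes l}) = 0$ for $l \geq (m-1)(n+1) + 1 = m(n+1) - n$, which is exactly the statement.

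The main obstacle is precisely this summand identification with the correct twist: a naive cyclic-cover calculation produces $\omega_{X}$-type summands rather than $\omega_{X}^{\otimes m}$-type ones, and bridging the gap requires the Viehweg fibre-product machinery together with delicate bookkeeping of the twist coming from the diagonal restriction, so that the power of $L$ in the summand matches $(m-1)(n+1)$ on the nose. Everything else (Bertini, log resolution, Koll\'ar vanishing for the cover, and Castelnuovo--Mumford propagation) is standard once this identification is established.
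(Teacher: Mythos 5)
First, a remark on scope: the paper does not prove Theorem \ref{thm_PoSc2} at all --- it is quoted verbatim from \cite[Corollary 2.9]{PoSc} and used as a black box --- so your attempt has to be measured against the Popa--Schnell argument rather than against anything in this text.

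Your proposal has a genuine gap exactly at the step you yourself flag as ``the heart of the argument'': realizing $f_*\omega_X^{\otimes m}\otimes L^{\otimes (m-1)(n+1)}$ as a direct summand of $\pi_*g_*\omega_{\tilde W}$ for a cyclic cover branched along a divisor $B\in |L^{\otimes m(n+1)}|$ pulled back from $Y$. The eigensheaf decomposition of such a cover only ever produces summands of the form $\omega_X\otimes f^*L^{\otimes j(n+1)}\otimes\mathcal J\bigl(\tfrac{j}{m}f^*B\bigr)$ for $0\le j\le m-1$: a single copy of $\omega_X$ twisted by a pullback from $Y$ and a multiplier ideal. No branch divisor drawn from $f^*|L^{\otimes \bullet}|$ can manufacture $\omega_X^{\otimes m}$, because the pluricanonical factor has to come from a divisor in a linear system involving $K_X$ itself. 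The appeal to the Viehweg fibre product $X^{(m)}$ does not repair this: that construction relates $f^{(m)}_*\omega_{X^{(m)\prime}/Y}$ to the tensor power $\bigotimes^m f_*\omega_{X/Y}$, which is a different sheaf from $f_*\bigl(\omega_{X/Y}^{\otimes m}\bigr)$, and restricting to the diagonal $X\hookrightarrow X^{(m)}$ yields no split surjection or direct-summand relation at the level of direct images. This is precisely the well-known reason why positivity of $f_*\omega_{X/Y}^{\otimes m}$ is genuinely harder than that of $f_*\omega_{X/Y}$; it is not ``delicate bookkeeping'' to be filled in, it is the entire content of the theorem.

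The actual proof in \cite{PoSc} runs in the opposite direction, by induction on $m$. The case $m=1$ is Koll\'ar vanishing. For $m\ge 2$, the inductive hypothesis combined with Castelnuovo--Mumford regularity shows that $f_*\bigl(\omega_X^{\otimes(m-1)}\bigr)\otimes L^{\otimes(m-1)(n+1)}$ is globally generated; one takes a log resolution of the relative base locus of $\omega_X^{\otimes(m-1)}\otimes f^*L^{\otimes(m-1)(n+1)}$, uses a general member of the resulting free linear system to write $mK_X+lf^*L\sim_{\mathbb Q}K_X+\Delta+f^*(\text{ample})$ with $(X,\Delta)$ klt whenever $l\ge(m-1)(n+1)+1=m(n+1)-n$, and concludes by Koll\'ar-type vanishing for klt pairs. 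In other words, the auxiliary divisor lives in $|(m-1)(K_X+(n+1)f^*L)|$, not in $|f^*L^{\otimes m(n+1)}|$, and the threshold $m(n+1)-n$ falls out of the induction. Your bound matches theirs because the twist was chosen to make it match, not because the proposed cover delivers it.
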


However, their proof of \cite[Theorem 1.7]{PoSc} does not work 
for the higher direct images $R^jf_*\omega_X^{\otimes m}$. 
They posed the following question in \cite[p. 2293]{PoSc}: 

\begin{ques}\label{ques1.3}
Let $X$ be a smooth projective variety and $f: X \to A$ be a morphism 
to an abelian variety. 
Then is $R^jf_*(\omega_X^{\otimes m})$ a GV-sheaf 
for any $j \geq 1$ and $m \geq 2$?
\end{ques}

We will give an answer of the question by constructing an example that 
$R^jf_*(\omega_X^{\otimes m})$ is not a GV-sheaf 
for some $j \geq 1$ and $m \geq 2$ 
(see Example \ref{ex4.5}).  
The existence of such an example implies that, 
in the notations of Theorem \ref{thm_PoSc2}, we can not obtain any effective 
lower bounds for positive integers $l$ which satisfy that 
$$H^i(Y, R^jf_*\omega_X^{\otimes m} \otimes L^{\otimes l}) = 0$$ 
for any $i >0$ 
(for details, see Remark \ref{rem4.7}).

\begin{ack}
I would like to express my gratitude to 
Professor Osamu Fujino for his much useful advice 
and generous support. 
I would also like to thank my colleagues for discussions 
and the referee for many valuable comments.   
\end{ack}

\section{Preliminaries}\label{sec2}
In this section, we collect some basic definitions.

\begin{say}[Cohomology support loci]\label{say2.1}
Let $X$ be a smooth projective variety, 
$f:X \to A$ be a morphism to an abelian variety 
and $\mathcal F$ be a coherent sheaf on $X$. 
Set $S^i_j(\mathcal F, f) = \{ \xi \in \Pic ^0(A) \mid
h^i(X,\mathcal F \otimes f^*\xi ) \geq j \}$. 
We call $S^i_j(\mathcal F, f)$ 
a \textit{cohomology support locus} of $\mathcal F$. 
This is a Zariski closed subset of the abelian variety $\Pic ^0(A)$. 
If $f$ is the Albanese morphism, we denote $S^i_j(\mathcal F, f)$ by 
$S^i_j(\mathcal F)$. 
We simply denote $S^i_1(\mathcal F,f)$ by $S^i(\mathcal F,f)$. 
\end{say}

\begin{say}[Fourier--Mukai transforms]\label{say2.2}
Let $A$ be an abelian variety
and $\hat{A}$ be the dual abelian variety of $A$. 
We define a functor $\Phi_{\mathcal P_A} : \Coh (A) \to \Coh (\hat{A})$
as 
$\Phi_{\mathcal P_A}(\mathcal F) = p_{2*}(p_1^* \mathcal F \otimes \mathcal P_A)$, 
where $\mathcal P_A$ is the (normalized) Poincar\'e bundle on $A \times \hat{A}$ 
and $p_1:A \times \hat{A} \to A$, $p_2:A \times \hat{A} \to \hat{A}$ are 
the natural projections. 
The right derived functor $R\Phi_{\mathcal P_A}$ of $\Phi_{\mathcal P_A}$
 is called the \textit{Fourier--Mukai transform}. 

Let $X$ be a smooth projective variety. 
Given a morphism $f:X \to A$, 
we put $\mathcal P_X = (f \times \id _{\hat{A}})^*\mathcal P_A$. 
We define a functor $\Phi_{\mathcal P_X}: \Coh (X) \to \Coh(\hat{A})$ 
as $\Phi_{\mathcal P_X}(\mathcal F) = 
\pi_{2*}(\pi_1^*\mathcal F \otimes \mathcal P_X)$, 
where $\pi_1:X \times \hat{A} \to X$ 
and $\pi_2:X \times \hat{A} \to \hat{A}$ are the natural projections. 
\end{say}

\begin{say}[GV-sheaves]\label{say2.3}
Let $X$ be a smooth projective variety, 
$f:X \to A$ be a morphism to an abelian variety 
and $\mathcal F$ be a coherent sheaf on $X$. 
We say that $\mathcal F$ is a 
\textit{GV-sheaf with respect to the morphism $f$} if 
$\codim _{\Pic ^0(A)}S^i(\mathcal F, f) \geq i$ 
for every $i>0$. 
This condition is equivalent to the condition that  
$\codim _{\Pic ^0(A)}\Supp R^i \Phi_{\mathcal P_X}(\mathcal F) \geq i$
for every $i>0$. 
If $f = \id _A$, then we simply call such a sheaf a GV-sheaf on $A$. 
For details, see \cite{PaPo}. 
\end{say}

\begin{say}[$\mathbb Q$-divisors]\label{say2.4}
Let $X$ be a normal variety. 
A \textit{$\mathbb Q$-divisor} on $X$ is a formal sum of 
finitely many prime divisors on $X$ 
with rational coefficients. 
\end{say}

\begin{say}[Operations on $\mathbb Q$-divisors]\label{say2.5}
Let $D = \sum d_jD_j$ be a $\mathbb Q$-divisor on a normal variety $X$, 
where $d_j$ are rational numbers and $D_j$ are prime divisors on $X$. 
We define the \textit{round-down} $\lfloor D \rfloor$ of $D$ 
as $\lfloor D \rfloor = \sum \lfloor d_j \rfloor D_j$, 
where $\lfloor d_j \rfloor$ is the largest integer 
among all the integers that are not larger than $d_j$. 
The \textit{fractional part} $\{ D \}$ of $D$ is defined as 
$\{ D \} = D - \lfloor D \rfloor$. 
Finally, we define 
$$D^+=\sum_{d_j>0}d_jD_j,\ D^-=-\sum_{d_j<0}d_jD_j,\ D^{=1} = \sum_{d_j=1}D_j.$$
Then we have $D = D^+ - D^-$. 
\end{say}

\begin{say}[Unitary local systems]\label{say2.6}
Let $X$ be a complex manifold. 
A \textit{local system} (\textit{of rank one}) on $X$ is a locally constant sheaf 
of one-dimensional $\mathbb C$-vector spaces on $X$.
Then there is a natural correspondence between 
local systems on $X$ and characters of the fundamental group $\pi_1(X)$  
(a \textit{character} of $\pi_1(X)$ is a group homomorphism 
$\rho : \pi_1(X) \to \mathbb C^*$). 
A \textit{unitary local system} on $X$ is a local system 
that corresponds to a character whose image is contained in $U(1)$. 
We denote the set of all unitary local systems on $X$ by $U_B(X)$. 

Let $\Pic ^\tau(X)$ be the subgroup of $\Pic (X)$ 
consisting of line bundles whose first Chern classes are torsion elements. 
Take $L \in \Pic ^\tau(X)$. 
$L$ has the Chern connection $\nabla : L \to \Omega^1_X \otimes L$ 
and $\Ker \nabla$ is a unitary local system on $X$. 
The correspondence $L \mapsto \Ker \nabla$ gives an isomorphism 
$\Pic ^\tau(X) \cong U_B(X)$.
For details, see \cite[Chapter 1, Section 2 and Section 4]{Kob}. 
\end{say}

\section{Cohomology support loci of pluricanonical bundles}\label{sec3}

First, we refer to 
Timmerscheidt's mixed Hodge theory for unitary local systems (see \cite{Tim}). 
Let $X$ be a smooth projective variety, 
$\Delta$ be a simple normal crossing divisor on $X$ 
and $U=X \setminus \Delta$. 
Take $\mathcal V \in U_B(U)$. 
Then we have Deligne's canonical extension $(L, \nabla)$ of $\mathcal V$, 
where $L$ is a holomorphic line bundle  on $X$ 
and $\nabla$ is a logarithmic connection  on $L$ 
with poles along $\Delta$ 
such that $\Ker \nabla |_U = \mathcal V$ (for details, see \cite[(1.3)]{EsVi} 
and \cite[p. 226]{Bud}). 
We have the logarithmic de Rham complex 
$$ 0 \to L \stackrel{\nabla}{\to} \Omega^1_X(\log \Delta) \otimes L  
\stackrel{\nabla}{\to} \Omega^2_X(\log \Delta) \otimes L \stackrel{\nabla}{\to} \cdots .$$
We give to the complex $(\Omega^{\bullet}_X(\log \Delta) \otimes L, \nabla)$ 
the following filtration $F$: 
$$ F^p(\Omega^q_X(\log \Delta) \otimes L) = 
\begin{cases}
0 & \text{for $q < p$,} \\
\Omega^p_X(\log \Delta) \otimes L & \text{for $q \geq p$.}
\end{cases}
$$
Let $\iota: U \to X$ be the inclusion. 
$R\iota_*\mathcal V$ is quasi-isomorphic to 
$\Omega^{\bullet}_X(\log \Delta) \otimes L$ (see \cite[(1.4)]{EsVi}). 
Therefore the filtration $F$  induces a spectral sequence 
$$ E^{p, q}_1 = H^q(X,\Omega^p_X(\log \Delta) \otimes L)
\Rightarrow \mathbb H ^{p+q}(X, \Omega^{\bullet}_X(\log \Delta) \otimes L)
= H^{p+q}(U,\mathcal V).$$

In general, we can not expect a natural mixed Hodge structure on 
$H^{p+q}(U, \mathcal V)$ when $\mathcal V$ is not defined over $\mathbb R$. 
However, Timmerscheidt proved: 

\begin{thm}[{\cite{Tim}}, Theorem (7.1)]\label{thm3.1}
In the above notations,
 the spectral sequence degenerates at $E_1$. 
\end{thm}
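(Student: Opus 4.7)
The plan is to establish the $E_1$-degeneration by realizing the spectral sequence as coming from a Hodge-type decomposition of $L^2$-harmonic $L$-valued forms on $U$. The unitarity of $\mathcal{V}$ is essential: it produces a flat Hermitian metric $h$ on $L|_U$ (via the identification $\Ker \nabla |_U = \mathcal{V}$), and flatness of $h$ together with completeness of the base metric is exactly what is needed to make the Kähler identities for $L$-valued forms carry over from the classical constant-coefficient case.

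First, I would equip $U$ with a complete K\"ahler metric $\omega$ of Poincar\'e type along $\Delta$, and equip $L|_U$ with the flat Hermitian metric $h$ coming from $\mathcal{V}$. Working with the $L^2$-complex of $L$-valued forms, write $\mathcal{H}^k_{(2)}$ for the space of $L^2$-harmonic $L$-valued $k$-forms and $\mathcal{H}^{p,q}_{(2)}$ for its bidegree $(p,q)$ part. Because $h$ is flat and $\omega$ is K\"ahler, the Nakano-type Kähler identities for $L$-valued forms hold verbatim, yielding $\Delta_d = 2\Delta_{\bar\partial} = 2\Delta_{\partial}$ and hence the orthogonal decomposition $\mathcal{H}^k_{(2)} = \bigoplus_{p+q=k} \mathcal{H}^{p,q}_{(2)}$.

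Next, I would match these analytic groups with the two sides of the spectral sequence. On the topological side, completeness of $\omega$ gives the de Rham-type identification $\mathcal{H}^k_{(2)} \cong H^k(U, \mathcal{V})$; on the coherent side, the fact that $L$ is Deligne's canonical extension (residues of $\nabla$ with eigenvalues in $[0,1)$) forces an isomorphism $\mathcal{H}^{p,q}_{(2)} \cong H^q(X, \Omega^p_X(\log \Delta) \otimes L)$ via an $L^2$-Dolbeault lemma for logarithmic forms. Combining these gives
$$\dim H^k(U, \mathcal{V}) = \sum_{p+q=k} \dim H^q(X, \Omega^p_X(\log \Delta) \otimes L),$$
which forces all higher differentials $d_r$ ($r\geq 1$) in the spectral sequence to vanish and yields $E_1$-degeneration.

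The main obstacle is the Dolbeault-type identification $\mathcal{H}^{p,q}_{(2)} \cong H^q(X, \Omega^p_X(\log \Delta) \otimes L)$: one must verify that the $L^2$-growth conditions imposed by the Poincar\'e metric along $\Delta$ match exactly with the growth dictated by the canonical extension, so that the $L^2$-Dolbeault complex genuinely computes the coherent cohomology of $\Omega^p_X(\log \Delta) \otimes L$. An alternative route that bypasses analysis would be to first treat torsion unitary characters by passing to a cyclic cover $Y\to X$ trivializing $\mathcal{V}$ (reducing to Deligne's classical mixed Hodge theory on $Y\setminus \Delta_Y$ and descending by taking the isotypic component of the Galois action), and then extending to arbitrary unitary $\mathcal{V}$ by exploiting density of torsion points in $U_B(U)$ together with semicontinuity of cohomology in a family of logarithmic connections; however, making the family/specialization argument precise requires a relative version of the logarithmic de Rham complex and does not seem easier than the harmonic approach above.
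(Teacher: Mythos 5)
The paper offers no argument for this statement: Theorem \ref{thm3.1} is quoted verbatim from Timmerscheidt \cite{Tim}, so there is nothing internal to compare your sketch against, and it must be judged on its own. Your strategy is indeed the one underlying Timmerscheidt's proof (harmonic theory for the flat unitary metric on $L|_U$ with respect to a complete K\"ahler metric of Poincar\'e type along $\Delta$), but the two identifications on which your dimension count rests, $\mathcal H^k_{(2)}\cong H^k(U,\mathcal V)$ and $\mathcal H^{p,q}_{(2)}\cong H^q(X,\Omega^p_X(\log\Delta)\otimes L)$, are both false whenever the local monodromy of $\mathcal V$ is trivial around some component of $\Delta$ --- a case that cannot be excluded here, since the theorem is applied to all of $\Pic^\tau(X)$, including $\Pic^0(X)$. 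For the Poincar\'e metric the $L^2$-de Rham cohomology computes the cohomology of the middle extension of $\mathcal V$ (Zucker), not $H^k(X,R\iota_*\mathcal V)$, and correspondingly the $L^2$-Dolbeault groups compute the cohomology of the subsheaf of $\Omega^p_X(\log\Delta)\otimes L$ allowing logarithmic poles only along components where the residue of $\nabla$ is nonzero. The trivial local system on $U=\mathbb C^*\subset\mathbb P^1$ already exhibits the failure: $\dim H^1(U,\mathbb C)=1=h^0(\Omega^1_{\mathbb P^1}(\log\Delta))$, yet $dz/z$ is not $L^2$ for the Poincar\'e metric and $H^1_{(2)}(U)=H^1(\mathbb P^1)=0$. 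So you correctly located the delicate point (matching $L^2$ growth with the canonical extension), but the matching you assert there only holds when every residue eigenvalue lies in $(0,1)$; in that ``pure'' case your argument is essentially complete, since then $R\iota_*\mathcal V$ coincides with the middle extension.

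What is missing is exactly the ``mixed'' content of \cite{Tim}: one must filter $\Omega^\bullet_X(\log\Delta)\otimes L$ by the order of poles along the components with trivial monodromy, identify the graded pieces via residue maps with (Tate twists of) the corresponding complexes for the restricted unitary local systems on the closed strata of $\Delta$, apply the $L^2$/harmonic argument to those pure pieces, and then deduce $E_1$-degeneration for the full complex from strictness of morphisms of the resulting Hodge structures. Your fallback route is also incomplete as stated: the cyclic-cover trick does settle torsion characters, but equality of $\sum_{p+q=k}\dim E_1^{p,q}$ and $\dim H^k(U,\mathcal V|_U)$ does not propagate from the dense set of torsion points to all of $U_B(X)$ by semicontinuity alone, because both quantities are merely upper semicontinuous and may jump together on the complement.
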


\begin{rem}\label{rem3.1.1}
Theorem \ref{thm3.1} holds for unitary local systems of arbitrary rank. 
\end{rem}

Next, we refer to a result of Budur. 
The following statement is a special case 
of \cite[Theorem 8.3]{Bud}: 

\begin{thm}[{\cite{Bud}}]\label{thm3.2}
Let $X$ be a smooth projective variety, 
$\Delta$ be a simple normal crossing divisor on $X$ 
and $U=X \setminus \Delta$. 
Then the image of the set 
$$ \{ \mathcal V \in U_B(X) \mid \dim \Gr ^p_F H^{p+q}(U, \mathcal V|_U) \geq j \} $$
in $\Pic ^\tau (X)$ is a finite union of 
torsion translates of abelian subvarieties for every $p$, $q$ and $j$. 
\end{thm}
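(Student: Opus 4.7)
The plan is to reduce the statement, via Timmerscheidt's degeneration theorem (Theorem \ref{thm3.1}), to a statement about cohomology jump loci of coherent sheaves on $X$, and then to apply the standard Simpson-type machinery for cohomology jump loci on $\Pic^\tau(X)$.

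First I would translate the problem using the correspondence $U_B(X) \cong \Pic^\tau(X)$ from \ref{say2.6}: to each $\mathcal{V} \in U_B(X)$ one associates a line bundle $L_{\mathcal V} \in \Pic^\tau(X)$ equipped with a logarithmic connection with poles along $\Delta$ (Deligne's canonical extension of $\mathcal V|_U$). By Theorem \ref{thm3.1}, the spectral sequence with $E^{p,q}_1 = H^q(X,\Omega^p_X(\log\Delta)\otimes L_{\mathcal V})$ abutting to $H^{p+q}(U,\mathcal V|_U)$ degenerates at $E_1$, yielding a canonical identification
$$\Gr^p_F H^{p+q}(U,\mathcal V|_U) \;\cong\; H^q(X,\Omega^p_X(\log\Delta)\otimes L_{\mathcal V}).$$
Consequently the locus in the statement is identified with the cohomology jump locus
$$\Sigma^{p,q}_j \;:=\; \{L\in\Pic^\tau(X) \mid h^q(X,\Omega^p_X(\log\Delta)\otimes L) \geq j\},$$
and Zariski closedness follows from upper-semicontinuity of cohomology applied to the family of coherent sheaves $\Omega^p_X(\log\Delta)\otimes L$ parametrized by $L$.

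The second step is to show that each irreducible component of $\Sigma^{p,q}_j$ is a translate of an abelian subvariety of $\Pic^0(X)$. Here I would follow the tangent-space/deformation theory approach initiated by Green--Lazarsfeld and extended to the log setting: given $L_0 \in \Sigma^{p,q}_j$, deformations of $L_0$ in $\Pic^0(X)$ correspond to classes $\theta \in H^1(X,\mathcal O_X)$, and the obstruction to lifting cohomology classes in $H^q(X,\Omega^p_X(\log\Delta)\otimes L_0)$ is controlled by cup-product maps; the quasi-isomorphism with $R\iota_*\mathcal V|_U$ together with the Hodge-theoretic degeneration yields the symmetry and vanishing needed to conclude that the tangent cone is a linear subspace, which integrates to an abelian subvariety via standard exponential/subtorus arguments.

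The third and main step is the torsion property of the translates, which I expect to be the principal obstacle. For unitary local systems on the open variety $U$, this step cannot be obtained purely from complex analytic deformation theory; one needs an arithmetic input. The standard route is Simpson's argument: the jump loci $\Sigma^{p,q}_j$ are defined over $\overline{\mathbb Q}$ and stable under the natural Galois action on $\Pic^\tau(X)$, and combining Galois stability with the linear subvariety structure forces the translation points to have finite order. In the mixed (logarithmic) setting of Theorem \ref{thm3.2}, this arithmeticity has to be installed at the level of Deligne's canonical extension, which requires checking that residues of $\nabla$ along components of $\Delta$ are preserved in a compatible way under Galois conjugation; alternatively, one can invoke the machinery of complex (or mixed twistor) variations of Hodge structure to run the absolute constructibility argument directly. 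Either implementation is technically heavy, and bridging between the analytic description of $U_B(X)$ and the arithmetic/Hodge-theoretic structures needed to pin down the translates is the core difficulty of the proof.
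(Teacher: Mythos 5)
There is an important mismatch of expectations here: the paper does not prove Theorem \ref{thm3.2} at all. It is quoted as a special case of \cite[Theorem 8.3]{Bud}, and the only argument the paper supplies in its vicinity is the passage from Theorem \ref{thm3.2} to Theorem \ref{thm3.3}, using Timmerscheidt's $E_1$-degeneration (Theorem \ref{thm3.1}) and the identification $U_B(X)\cong\Pic^\tau(X)$, under which the canonical extension of $\mathcal V|_U$ for $\mathcal V\in U_B(X)$ is just the corresponding $L\in\Pic^\tau(X)$ with its Chern connection. Your first step reproduces exactly that passage (in the reverse direction), and it is correct; but it only reformulates the statement as Theorem \ref{thm3.3}, it does not prove it.

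The genuine gap is that your second and third steps, which are the entire content of the theorem, are not carried out. For the subtorus structure, the Green--Lazarsfeld cup-product/derivative-complex argument you invoke controls the tangent cone to the jump locus at a given point and at best bounds dimensions of components; by itself it does not show that a component of $\{L \mid h^q(X,\Omega^p_X(\log\Delta)\otimes L)\geq j\}$ is a translate of an abelian subvariety, and you give no substitute for the isotropic-subspace or moduli-of-local-systems arguments that are actually needed (in the logarithmic setting the relevant deformation space is the moduli of rank-one local systems, or of pairs consisting of a line bundle and a logarithmic connection, not merely $H^1(X,\mathcal O_X)$). For the torsion property you explicitly concede that the implementation is ``technically heavy'' and defer it to Simpson-style absolute constructibility or mixed twistor machinery without executing either; but this is precisely the theorem. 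Budur's actual proof realizes these loci inside the moduli space of unitary local systems on $U$, keeps track of the residues of the canonical extensions along the components of $\Delta$, and reduces via finite covers and the logarithmic Hodge theory to Simpson's theorem \cite{Sim}. As written, your proposal establishes only the (easy) equivalence of Theorems \ref{thm3.2} and \ref{thm3.3} --- which the paper already records --- and replaces the proof of the hard statement with a description of where its difficulty lies.
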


Take a unitary local system $\mathcal V \in U_B(X)$ 
on $X$ 
and let $L \in \Pic ^\tau(X)$ be the corresponding line bundle. 
Then the canonical extension of $\mathcal V |_U$ is $L$ 
with the Chern connection. 
So by Theorem \ref{thm3.1},  
we can restate Theorem \ref{thm3.2} as follows:

\begin{thm}\label{thm3.3}
Let $X$ be a smooth projective variety, 
$\Delta$ be a simple normal crossing divisor on $X$ 
and $U=X \setminus \Delta$. 
Then the set 
$$ \{ L \in \Pic ^\tau (X) \mid \dim H^q(X,\Omega^p_X(\log \Delta) \otimes L) \geq j \} $$
is a finite union of torsion translates of abelian subvarieties for every $p$, $q$ and $j$. 
\end{thm}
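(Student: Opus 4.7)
The plan is to identify the set appearing in Theorem \ref{thm3.3} with the image in $\Pic^\tau(X)$ of the set in Budur's Theorem \ref{thm3.2}, so that the conclusion is immediate. Two ingredients are required: the isomorphism $\Pic^\tau(X) \cong U_B(X)$ from \ref{say2.6}, and the $E_1$-degeneration of Theorem \ref{thm3.1}.

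First, I would fix $L \in \Pic^\tau(X)$ and let $\mathcal V \in U_B(X)$ denote the corresponding unitary local system, so that $\mathcal V = \Ker \nabla$ for the Chern connection $\nabla$ on $L$. The key observation is that the Deligne canonical extension of $\mathcal V|_U$ across $\Delta$ is precisely $L$ together with its Chern connection: indeed, $L$ already lives as a holomorphic line bundle on all of $X$, the Chern connection is flat, and since $\mathcal V$ itself extends to a local system on $X$, the monodromy of $\mathcal V|_U$ around each component of $\Delta$ is trivial, forcing all residues to lie in $[0,1)$ (in fact to vanish). This is exactly the characterization of Deligne's canonical extension.

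Next, I would apply Theorem \ref{thm3.1} to the logarithmic de Rham complex $(\Omega^{\bullet}_X(\log \Delta) \otimes L, \nabla)$. The Hodge-to-de Rham type spectral sequence
$$E^{p,q}_1 = H^q(X, \Omega^p_X(\log \Delta) \otimes L) \Rightarrow H^{p+q}(U, \mathcal V|_U)$$
degenerates at $E_1$, so that
$$\dim H^q(X, \Omega^p_X(\log \Delta) \otimes L) = \dim \Gr^p_F H^{p+q}(U, \mathcal V|_U)$$
for all $p,q$. Under the isomorphism $U_B(X) \cong \Pic^\tau(X)$, the condition $\dim \Gr^p_F H^{p+q}(U, \mathcal V|_U) \geq j$ thus translates exactly into $\dim H^q(X, \Omega^p_X(\log \Delta) \otimes L) \geq j$. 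Invoking Theorem \ref{thm3.2} then yields the result.

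The only real verification, rather than an obstacle, is the claim that the canonical extension of $\mathcal V|_U$ equals $L$ with its Chern connection; this relies on the fact that $\mathcal V$ is defined on all of $X$, not merely on $U$. Once that identification is in hand, the proof is a purely formal translation via the $E_1$-degeneration.
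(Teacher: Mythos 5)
Your proposal is correct and follows essentially the same route as the paper: the paper likewise deduces Theorem \ref{thm3.3} from Theorem \ref{thm3.2} by observing that the canonical extension of $\mathcal V|_U$ is $L$ with its Chern connection and then using the $E_1$-degeneration of Theorem \ref{thm3.1} to identify $\dim \Gr^p_F H^{p+q}(U,\mathcal V|_U)$ with $\dim H^q(X,\Omega^p_X(\log\Delta)\otimes L)$. Your extra justification of the canonical-extension claim (trivial local monodromy around $\Delta$, hence vanishing residues) is a correct elaboration of a point the paper merely asserts.
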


By using Theorem \ref{thm3.3}, we can prove Theorem \ref{thm1.1} in Section 
\ref{sec1}: 

\begin{thm}[= Theorem {\ref{thm1.1}}]\label{thm3.4}
Let $X$ be a smooth projective variety, 
$\Delta$ be a boundary $\mathbb Q$-divisor on $X$ 
with simple normal crossing support, 
$f:X \to A$ be a morphism to an abelian variety 
and $D$ be a Cartier divisor on $X$ 
such that $D \sim _{\mathbb Q} K_X + \Delta$. 
Then $S^i_j(D, f)$ is a finite union of torsion translates of 
abelian subvarieties of $\Pic ^0(A)$ 
for every $i \geq 0$ and $j \geq 1$.
\end{thm}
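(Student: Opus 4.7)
The strategy is to reduce Theorem \ref{thm3.4} to Budur's Theorem \ref{thm3.3} (the case of a reduced SNC boundary) via a cyclic covering construction that absorbs the fractional coefficients of $\Delta$. Fix a positive integer $m$ with $m\Delta$ integral and $mD \sim m(K_X+\Delta)$, and set $\mathcal{M} := \mathcal{O}_X(D-K_X)$, so that $\mathcal{M}^{\otimes m} \cong \mathcal{O}_X(m\Delta)$. This data defines a degree-$m$ cyclic cover, and after normalizing and taking a log resolution we obtain a smooth projective variety $Y$ with a finite $\mathbb{Z}/m\mathbb{Z}$-Galois morphism $\pi\colon Y\to X$, such that the reduced preimage $\pi^{-1}(\Delta^{\mathrm{red}})_{\mathrm{red}}$ together with any exceptional divisors forms a reduced SNC divisor on $Y$. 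Write $g := f\circ\pi\colon Y\to A$.

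Pulling back the relation $D\sim_{\mathbb{Q}} K_X+\Delta$ via $\pi$ and using $\pi^*K_X = K_Y - R$, where $R$ is the ramification divisor, and exploiting that $\pi^*\Delta$ is now integral on $Y$, we split the integer divisor $\pi^*\Delta - R$ as $\Gamma^+ - \Gamma^-$ with $\Gamma^+ = \pi^{-1}(\lfloor\Delta\rfloor)_{\mathrm{red}}$ reduced SNC and $\Gamma^-\geq 0$ effective, supported on the components coming from the fractional part of $\Delta$. A careful calculation then produces a torsion line bundle $N_0\in\Pic^\tau(Y)$ with
\[
\mathcal{O}_Y(\pi^*D + \Gamma^-) \;\cong\; \omega_Y(\Gamma^+)\otimes N_0,
\]
so that for every $\xi\in\Pic^0(A)$ we have $\mathcal{O}_Y(\pi^*D + \Gamma^-)\otimes g^*\xi \cong \omega_Y(\Gamma^+)\otimes L_\xi$ with $L_\xi := N_0\otimes g^*\xi\in\Pic^\tau(Y)$; the map $\xi\mapsto L_\xi$ is a translate of a homomorphism of abelian varieties. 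Applying Theorem \ref{thm3.3} to $(Y,\Gamma^+)$ with $p=\dim Y$ and $q=i$, together with the fact that the preimage of a finite union of torsion translates under a morphism of abelian varieties is again such a union, we conclude that
\[
\{\xi\in\Pic^0(A) : h^i(Y,\,\mathcal{O}_Y(\pi^*D+\Gamma^-)\otimes g^*\xi)\geq j'\}
\]
is a finite union of torsion translates of abelian subvarieties of $\Pic^0(A)$ for every $j'\geq 1$.

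The descent from $Y$ to $X$ goes via the projection formula and the Galois isotypic decomposition $\pi_*\mathcal{O}_Y(\Gamma^-) = \bigoplus_{k=0}^{m-1} F_k$, which gives
\[
h^i(Y,\,\mathcal{O}_Y(\pi^*D+\Gamma^-)\otimes g^*\xi) = \sum_{k=0}^{m-1} h^i(X,\,\mathcal{O}_X(D)\otimes F_k\otimes f^*\xi).
\]
A local computation with ramification indices (noting that the coefficient of $\Gamma^-$ on each component $\tilde{\Delta}_j$ is strictly less than the ramification index $q_j$) shows $F_0 = \mathcal{O}_X$, so the $k=0$ summand equals $h^i(X,\mathcal{O}_X(D)\otimes f^*\xi)$, which is what we want. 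Each summand for $k\geq 1$ has the form $h^i(X,\mathcal{O}_X(D_k)\otimes f^*\xi)$ with $D_k\sim_{\mathbb{Q}} K_X+\Delta_k$ for a $\mathbb{Q}$-boundary $\Delta_k$ with SNC support and with strictly smaller denominator complexity than $\Delta$, so we reduce each such summand to the same framework by induction on the lcm of the denominators of the coefficients of the boundary, with base case the reduced SNC case already covered by Theorem \ref{thm3.3}. Isolating the $k=0$ contribution then yields the torsion-translate structure for $S^i_j(D,f)$.

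The main obstacle is the second paragraph, where one must verify that the cyclic cover really does produce a torsion line bundle $N_0$ fitting into the displayed isomorphism with $\Gamma^+$ reduced; this requires careful bookkeeping of the ramification indices $e_j = q_j$, the integrality of $\pi^*\Delta$ on $Y$, and the resulting relations in $\Pic(Y)$ coming from $\pi^*\mathcal{M}$. Granted this calculation, the descent by induction in the third paragraph is essentially combinatorial, provided one confirms that the new boundaries $\Delta_k$ produced at each step indeed have strictly smaller denominator complexity than $\Delta$.
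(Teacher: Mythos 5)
Your overall strategy (a cyclic covering to absorb the fractional part of $\Delta$, followed by Budur's Theorem \ref{thm3.3} in the reduced SNC case) is the same as the paper's, but there are two genuine gaps. The first is structural: you cannot have a variety $Y$ that is simultaneously smooth (as required to invoke Theorem \ref{thm3.3}) and finite Galois over $X$ (as required for the isotypic decomposition $\pi_*\mathcal O_Y(\Gamma^-)=\bigoplus_k F_k$ and the projection formula). The normalization of a cyclic cover branched along a genuinely non-smooth SNC divisor is in general singular (locally $z^m=xy$ already produces an $A_{m-1}$ singularity), and once you pass to a log resolution you lose finiteness and the direct-sum decomposition of the pushforward. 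The paper resolves exactly this tension: it keeps the finite (possibly singular) normalized cover $\pi:Y\to X$, checks that $(Y,\pi^*\lfloor\Delta\rfloor)$ is log canonical via Hurwitz and \cite[Proposition 5.20]{KoMo}, and then uses the Fujino--Kov\'acs vanishing theorem to prove $R\mu_*\mathcal O_{Y'}(-\Delta_{Y'}^{=1})\cong\mathcal O_Y(-\pi^*\lfloor\Delta\rfloor)$ for a log resolution $\mu:Y'\to Y$, so that cohomology on the smooth $Y'$ computes cohomology on the singular $Y$. This vanishing input is the key missing idea in your proposal; without it the bridge between the finite cover and the smooth model where Theorem \ref{thm3.3} applies is not there.

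The second gap is your inductive treatment of the summands $F_k$ for $k\geq 1$. The claim that each such summand is again of the form $\mathcal O_X(D_k)$ with $D_k\sim_{\mathbb Q}K_X+\Delta_k$ for a \emph{boundary} $\Delta_k$ of strictly smaller denominator complexity is false in general: already for $\Delta=\tfrac13 H$ and $m=3$ the summand with $k=2$ gives $D-2(D-K_X)\sim_{\mathbb Q}K_X-\tfrac13 H$, which is not of adjoint-boundary type, and in other examples the denominators of the fractional parts $\{k\Delta\}$ do not decrease, so the induction need not terminate. The correct way around this (used in the paper, and also a viable repair of your argument) is not to analyze the other summands individually at all, but to invoke the standard isolation argument of \cite[proof of Theorem 6.1]{Bud}: once the jumping loci of the \emph{total} cohomology $h^i(Y,\cdot)$ are known to be finite unions of torsion translates for all $j$ simultaneously, the jumping loci of any single direct summand of the pushforward can be extracted and have the same structure. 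Combined with Serre duality this yields the statement for $\mathcal O_X(D)$ directly, with no induction on denominators.
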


\begin{proof}
We divide the proof into two steps.

\begin{step}
We reduce to the case when 
$\Delta$ is a (\textit{reduced}) simple normal crossing divisor. 

Set $C = D-K_X-\lfloor \Delta \rfloor$.
Then $C \sim_{\mathbb Q} \{ \Delta \}$, 
so we can take a positive integer $N$ 
such that $NC \sim N\{ \Delta \}$. 
Let $\pi : Y \to X$ be the normalization of the cyclic cover 
$$ \Spec \bigoplus _{k=0}^{N-1} \mathcal O_X(-kC). $$ 
Then it follows that 
$$ \pi_* \mathcal O_Y 
= \bigoplus_{k=0}^{N-1}\mathcal O_X(-kC+ \lfloor k \{ \Delta \} \rfloor) $$
(see \cite[Proposition 9.8]{Kol}). 
So 
$$ \pi_* \mathcal O_Y(-\pi^* \lfloor \Delta \rfloor)
= \bigoplus_{k=0}^{N-1} \mathcal O_X(-kC+ \lfloor k \{ \Delta \} \rfloor 
-\lfloor \Delta \rfloor ) $$
contains $\mathcal O_X(-C-\lfloor \Delta \rfloor) = \mathcal O_X(-(D-K_X))$ 
as a direct summand. 
Then by the standard argument (cf. the proof of \cite[Theorem 6.1]{Bud}) and 
Serre duality, it follows that 
$S^i_j(D,f)$ is a finite union of torsion translates of 
abelian subvarieties for every $i$ and $j$ 
if $S^i_j(-\pi^*\lfloor \Delta \rfloor,f \circ \pi)$ is a finite union of torsion translates of abelian subvarieties 
for every $i$ and $j$. 

So we consider $(Y, \pi^*\lfloor \Delta \rfloor)$. 
Note that $\pi^*\lfloor \Delta \rfloor$ is a reduced divisor. 
Take a divisor $\Delta_Y$ on $Y$ as $K_Y + \Delta_Y = \pi^*(K_X + 
\Delta_{\red})$. 
Then, by Hurwitz's formula, we see that $\Delta_Y$ is reduced and 
$\pi^*\lfloor \Delta \rfloor \leq \Delta_Y$. 
Moreover, $(Y, \Delta_Y)$ is a log canonical pair since $(X, \Delta_{\red})$ is 
log canonical and $\pi$ is finite (see \cite[Proposition 5.20]{KoMo}). 
These arguments imply that $(Y,  \pi^*\lfloor \Delta \rfloor)$ is a log canonical 
pair. 

Take a log resolution $\mu: Y' \to Y$ of $(Y, \pi^*\lfloor \Delta \rfloor)$. 
Set 
$$ \Delta_{Y'}= \mu^*(K_Y+\pi^* \lfloor \Delta \rfloor) - K_{Y'}, $$
then 
$$ \Delta_{Y'}^{=1}= \mu^{-1}_*(\pi^* \lfloor \Delta \rfloor)+E $$
for some reduced $\mu$-exceptional divisor $E$. 
Since $-K_{Y'/Y} = -(K_{Y'} - \mu^*K_Y)$ has no irreducible components 
with coefficient 1, 
every component of $E$ is contained in 
$\mu^*\pi^*\lfloor \Delta \rfloor$. 
Since $E$ is $\mu$-exceptional, $E$ is in fact contained in 
$\mu^*\pi^*\lfloor \Delta \rfloor - \mu_*^{-1}\pi^*\lfloor \Delta \rfloor $. 
So $F= \mu^*\pi^*\lfloor \Delta \rfloor - 
\mu_*^{-1}\pi^*\lfloor \Delta \rfloor -E$ 
is an effective and $\mu$-exceptional divisor on $Y'$. 
By the Fujino--Kov\'acs vanishing theorem (see \cite{Kov} and \cite{Fuj2}), 
$$ R^i\mu_* \mathcal O_{Y'}(-\Delta_{Y'}^{=1}) = 0 $$
for $i>0$. 
Therefore    
\begin{align*}
R\mu_* \mathcal O_{Y'}(-\Delta_{Y'}^{=1}) 
&\cong \mu_* \mathcal O_{Y'}(-\Delta_{Y'}^{=1}) \\
&\cong \mu_* \mathcal O_{Y'}(-\mu_*^{-1}\pi^* \lfloor \Delta \rfloor -E) \\
&\cong \mu_* \mathcal O_{Y'}(-\mu^*\pi^* \lfloor \Delta \rfloor +F) \\
&\cong \mathcal O_Y(-\pi^* \lfloor \Delta \rfloor).
\end{align*} 
Thus we have $S^i_j(-\Delta_{Y'}^{=1}, f \circ \pi \circ \mu)
= S^i_j(-\pi^* \lfloor \Delta \rfloor, f \circ \pi)$. 
Moreover, $\Delta_{Y'}^{=1}$ is a simple normal crossing divisor on $Y'$. 
So it is sufficient to give a proof in the case when 
$\Delta$ is a simple normal crossing divisor. 
\end{step}

\begin{step}
Assume that $\Delta$ is a simple normal crossing divisor. 
Put $U=X \setminus \Delta$. 
By Theorem \ref{thm3.3}, the set 
\[ \{ L \in \Pic^\tau (X) \mid h^q(X, \Omega^p_X(\log \Delta) \otimes L) \geq j \} \] 
is a finite union of torsion translates of 
abelian subvarieties for any $p$, $q$ and $j$. 
In particular, $S^i_j(K_X+\Delta)=S^i_j(K_X+\Delta, \alb _X)$ is a finite union of 
torsion translates of abelian subvarieties for any $i$ and $j$, 
where $\alb_X$ denotes the Albanese morphism of $X$. 
Considering the universality of the Albanese morphism, 
it follows from this that 
$S^i_j(K_X+\Delta, f)$ is also a finite union of 
torsion translates of abelian subvarieties. 
\end{step}
\end{proof}

\begin{rem}\label{rem_for_thm3.4}
Note that the pair $(X, \Delta)$ in the statement of Theorem \ref{thm3.4}  
is not an arbitrary log canonical pair. 
The technical reason why Theorem \ref{thm3.4} 
for arbitrary log canonical pairs is not made 
is that the arguments for unitary local systems require 
log smoothness of $(X, \Delta)$
(i.e. the condition that 
$X$ is smooth and $\Delta$ has simple normal crossing support), and 
the statement for arbitrary log canonical pairs cannot easily be reduced 
to the log smooth case. 
On the other hand, Theorem \ref{thm3.7} below holds for arbitrary log canonical 
pairs.  
\end{rem}

We can also prove a generalization of the generic vanishing theorem for log 
canonical pairs.

\begin{thm}\label{thm3.5}
Let $X$ be a smooth projective variety, 
$\Delta$ be a boundary $\mathbb Q$-divisor on $X$ 
with simple normal crossing support, 
$f:X \to A$ be a morphism to an abelian variety 
and $D$ be a Cartier divisor on $X$ 
such that $D \sim _{\mathbb Q} K_X + \Delta$. 
Set 
$l = \max\{ \dim V - \dim f(V) \mid V=X\ \mathrm{or}\ V\ \mathrm{is\ a\ 
log\ canonical\ center\ of}\ (X, \Delta) \}$.
Then 
$$\codim _{\Pic^0(A)} S^i(D, f) \geq i - l$$
for any $i$. 
\end{thm}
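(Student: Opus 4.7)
The plan is to combine Theorem~\ref{thm3.4} on the structure of the loci $S^i(D,f)$ with a Leray spectral sequence argument on a suitable quotient abelian variety, bounding codimensions in terms of fiber dimensions of the composite morphism along log canonical strata.

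First I would reduce to the case that $\Delta$ is a reduced simple normal crossing divisor, using the same cyclic cover construction as in Step~1 of the proof of Theorem~\ref{thm3.4}. One must check that $l$ is controlled under this reduction: each log canonical stratum of $(Y', \Delta_{Y'}^{=1})$ either dominates a log canonical center of $(X, \Delta)$ via $\pi \circ \mu$, or is $\mu$-exceptional and maps into a log canonical stratum of $(X, \Delta)$, so the relevant fiber dimensions do not increase. Combined with the Fujino--Kov\'acs vanishing from that step, this lets us replace $(X, \Delta, f, D)$ by the log smooth pair with reduced boundary.

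By Theorem~\ref{thm3.4}, every irreducible component of $S^i(D, f)$ is a torsion translate of an abelian subvariety of $\Pic^0(A)$. Fix such a component $T$; after absorbing its torsion part into $D$ we may assume $T = p^* \Pic^0(B)$ for some surjection $p : A \to B$ of abelian varieties with connected fibers, and set $g = p \circ f : X \to B$. The containment $T \subset S^i(D, f)$ means $h^i(X, \mathcal O_X(D) \otimes g^*\eta) > 0$ for every $\eta \in \Pic^0(B)$. Invoking Proposition~\ref{prop4.2} so that each $R^q g_* \mathcal O_X(D)$ is a GV-sheaf on $B$, the Leray spectral sequence for $g$ degenerates for generic $\eta \in \Pic^0(B)$ and forces $R^i g_* \mathcal O_X(D) \neq 0$.

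The final step is to deduce the bound $i \leq l + \codim_{\Pic^0(A)} T$ from this non-vanishing. In the log smooth reduced case, an Ambro--Fujino-type decomposition of $R g_* \mathcal O_X(K_X + \Delta)$ into pushforwards from log canonical strata implies that $R^i g_* \mathcal O_X(D) = 0$ whenever $i$ exceeds the generic relative dimension of $g|_V$ for every log canonical stratum $V$ of $(X, \Delta)$ and for $V = X$. For each such $V$, the elementary inequality $\dim V - \dim g(V) \leq (\dim V - \dim f(V)) + \dim \ker p \leq l + \codim_{\Pic^0(A)} T$ then yields the required bound. The main obstacle I expect is this stratified vanishing: the coarse estimate $R^q g_* = 0$ above the maximum fiber dimension is insufficient, since arbitrary subvarieties could realize large fibers, so one genuinely needs the finer vanishing controlled by the log canonical centers entering the definition of $l$; getting this to line up exactly with the log canonical strata on the nose is the delicate part.
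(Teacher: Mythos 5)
Your overall architecture (Leray spectral sequence plus the GV property of the higher direct images from Proposition~\ref{prop4.2}, reducing everything to a vanishing statement for $R^j$ of the pushforward above degree $l$) is in the right spirit, but the proof has a genuine gap exactly where you flag it: the ``stratified vanishing'' $R^i g_* \mathcal O_X(D) = 0$ for $i$ exceeding the relative dimensions over the log canonical strata is the actual content of the theorem, and you do not prove it --- you only name it as ``the delicate part.'' The paper's proof supplies precisely this ingredient, and by an elementary argument: writing $S = \lfloor \Delta \rfloor$ and picking a component $\Delta_i$ of $S$, the short exact sequence $0 \to \mathcal O_X(D - \Delta_i) \to \mathcal O_X(D) \to \mathcal O_{\Delta_i}(D|_{\Delta_i}) \to 0$ together with adjunction ($D|_{\Delta_i} \sim_{\mathbb Q} K_{\Delta_i} + (\Delta - \Delta_i)|_{\Delta_i}$) lets one induct simultaneously on $\dim X$ and on the number of components of $S$, since the log canonical centers of $(X, \Delta - \Delta_i)$ and of $(\Delta_i, (\Delta-\Delta_i)|_{\Delta_i})$ all sit among those of $(X,\Delta)$ (or are $X$, $\Delta_i$ themselves); the outer terms vanish in degrees $> l$ by induction, hence so does the middle. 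With that lemma, the paper concludes directly from $S^i(D,f) \subset \bigcup_{q=0}^{l} S^{i-q}(R^q f_* \mathcal O_X(D))$ and the GV bound $\codim S^{i-q} \geq i-q$; no appeal to Theorem~\ref{thm3.4}, to the component structure of $S^i(D,f)$, or to quotient abelian varieties is needed.

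Two further problems with your reductions. First, the passage to a reduced boundary via the cyclic cover and log resolution is not only unnecessary but does not obviously preserve $l$: the exceptional part $E$ of $\Delta_{Y'}^{=1}$ consists of divisors whose images in $Y$ may be small (even points), so a stratum $V \subset E$ can have $\dim V - \dim (f\circ\pi\circ\mu)(V)$ much larger than $l$; your assertion that ``the relevant fiber dimensions do not increase'' fails for such strata unless you first push forward to $Y$ using $R\mu_* \mathcal O_{Y'}(-\Delta_{Y'}^{=1}) \cong \mathcal O_Y(-\pi^*\lfloor\Delta\rfloor)$ and work with the pair $(Y, \pi^*\lfloor\Delta\rfloor)$, whose strata do map finitely onto strata of $\lfloor\Delta\rfloor$. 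Second, your detour through an irreducible component $T = p^*\Pic^0(B)$ of $S^i(D,f)$ and the morphism $g = p\circ f$ forces you to prove the vanishing lemma for the composite $g$ and then pay the price $\dim\ker p = \codim T$ in the fiber-dimension count; this is workable but strictly harder than the paper's route, which needs the vanishing only for $f$ itself. If you supply the inductive proof of $R^j f_* \mathcal O_X(D) = 0$ for $j > l$ sketched above, you can delete both reductions and your argument collapses to the paper's.
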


For the definition of log canonical centers, see \cite[Definition 4.6]{Fuj1}. 

\begin{proof}
Let $S=\lfloor \Delta \rfloor$ and $\Delta_i$ be an irreducible component of $S$.
Consider the exact sequence 
$\cdots \to R^jf_*\mathcal O_X(D-\Delta_i) \to R^jf_*\mathcal O_X(D) 
\to R^jf_*\mathcal O_{\Delta_i}(D|_{\Delta_i}) \to \cdots$.
Then it follows that $R^jf_*\mathcal O_X(D)=0$ for $j >l$ by induction of both
the dimension of $X$ and the number of irreducible components of $S$.

Consider the Leray spectral sequence
$$E^{p,q}_2(\xi) = H^p(A, R^qf_*\mathcal O_X(D) \otimes \xi) \Rightarrow 
H^{p+q}(X, \mathcal O_X(D) \otimes f^*\xi),$$
where $\xi \in \Pic ^0(A)$. 
Then it follows by the spectral sequence that 
$$ S^i(D, f) \subset \bigcup_{q=0}^l S^{i-q}(R^qf_*\mathcal O_X(D)).$$
Furthermore, $R^qf_*\mathcal O_X(D)$ are GV-sheaves on $A$ for all $q$ (see 
Proposition \ref{prop4.2} below), so 
$$\codim S^{i-q}(R^qf_*\mathcal O_X(D)) \geq i-q$$ 
for $0 \leq i \leq l$. 
Hence $\codim S^i(D, f) \geq i-l$. 
\end{proof}

\begin{rem}\label{rem3.6}
In the notations of Theorem \ref{thm3.5}, assume in addition that 
$(X, \Delta)$ is a 
Kawamata log terminal pair. Then $l = \dim X - \dim f(X)$. 
In particular, assume $\Delta=0$, then Theorem \ref{thm3.5} is the original 
generic vanishing theorem of Green and Lazarsfeld in \cite{GrLa}. 
\end{rem}

Next, we consider the cohomology support loci of $m(K_X+\Delta)$. 

\begin{thm}[= Theorem {\ref{thm1.2}}]\label{thm3.7}
Let $X$ be a normal projective variety, 
$\Delta$ be a boundary $\mathbb Q$-divisor on $X$ 
such that $K_X+\Delta$ is $\mathbb Q$-Cartier and $(X, \Delta)$ is 
log canonical, 
$f:X \to A$ be a morphism to an abelian variety, 
and $D$ be a Cartier divisor on $X$ 
such that $D \sim _{\mathbb Q} m(K_X + \Delta)$ for some positive integer $m$. 
Then $S^0_j(D, f)$ is a finite union of 
torsion translates of abelian subvarieties of $\Pic ^0(A)$ 
for any $j \geq 1$.
\end{thm}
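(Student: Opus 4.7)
The plan is to combine the log smooth reduction used in Theorem \ref{thm3.4} with a cyclic cover argument that reduces the pluricanonical case $D \sim_{\mathbb Q} m(K_X + \Delta)$ to the canonical case handled in Theorem \ref{thm3.4}.

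First, I would carry out a log smooth reduction in the spirit of Step 1 of Theorem \ref{thm3.4}. Take a log resolution $g \colon Y \to X$ of $(X, \Delta)$ and write $K_Y + \Delta_Y = g^*(K_X + \Delta) + F$, where $\Delta_Y$ is a boundary with simple normal crossing support and $F$ is an effective $g$-exceptional $\mathbb Q$-divisor; this uses that $(X, \Delta)$ is log canonical. After replacing $m$ by a multiple clearing denominators, $D_Y := g^*D + mF$ is Cartier with $D_Y \sim_{\mathbb Q} m(K_Y + \Delta_Y)$, and $g_*\mathcal O_Y(D_Y) = \mathcal O_X(D)$ by the projection formula together with $g_*\mathcal O_Y(mF) = \mathcal O_X$. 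Thus $S^0_j(D, f) = S^0_j(D_Y, f \circ g)$, so we may assume $(X, \Delta)$ is log smooth.

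Second, for an irreducible component $V$ of $S^0_j(D, f)$ and a general $\xi \in V$, choose $\eta \in \Pic^0(A)$ with $m\eta = \xi$ (possible since $[m] \colon \Pic^0(A) \to \Pic^0(A)$ is surjective), and let $s \in H^0(X, \mathcal O_X(D + m f^*\eta))$ be nonzero with divisor $B$, so that $\tfrac{1}{m} B \sim_{\mathbb Q} K_X + \Delta + f^*\eta$. After a further log resolution of $(X, \Delta + B)$, I would use $s$ to form a cyclic cover of degree $m$ associated to the $\mathbb Q$-Cartier divisor $K_X + \Delta + f^*\eta$ (analogous to Step 1 of Theorem \ref{thm3.4}). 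This produces a smooth projective variety $Z$, a boundary $\Delta_Z$ with SNC support, and a Cartier divisor $D_Z$ on $Z$ with $D_Z \sim_{\mathbb Q} K_Z + \Delta_Z$, such that $\mathcal O_X(D) \otimes f^*\xi$ appears as a direct summand of an appropriate push-forward of $\mathcal O_Z(D_Z)$, up to a torsion twist in $\Pic^0(A)$.

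Finally, Theorem \ref{thm3.4} applied to $(Z, \Delta_Z, D_Z, Z \to A)$ identifies the relevant cohomology support loci as finite unions of torsion translates of abelian subvarieties of $\Pic^0(A)$. Since the isogeny $[m]$ sends torsion translates of abelian subvarieties to torsion translates, descending through the direct summand relation and through $[m]$ shows $V$ is a finite union of torsion translates of abelian subvarieties of $\Pic^0(A)$. The hard step is the middle one: constructing the cyclic cover for the merely $\mathbb Q$-Cartier divisor $K_X + \Delta + f^*\eta$ and arranging $(Z, \Delta_Z)$ to be log smooth with $\Delta_Z$ a genuine boundary requires extra cyclic covers or log resolutions whenever coefficients of $\Delta + \tfrac{1}{m} B$ exceed $1$, and the bookkeeping that tracks how torsion structure on $\Pic^0(A)$ descends through $[m]$ and through the torsion twists is delicate.
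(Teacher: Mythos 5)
Your skeleton---pass to a log resolution, choose an $m$-th root of $\xi$, reduce the pluricanonical divisor to an adjoint divisor $K+(\text{boundary})$ handled by Theorem \ref{thm3.4}, and descend through multiplication by $m$---matches the paper's strategy, and your first reduction is essentially the paper's Step~1 (the paper additionally resolves the linear system $|D+f^*\xi|$ at the same time). But the middle and final steps contain a genuine gap, and it sits at the heart of the theorem. The paper does \emph{not} take a cyclic cover at this stage. It writes $\mu^*|D+f^*\xi| = |V| + F$ with $|V|$ free, picks a general $M \in |V|$, and sets $D_0 = D - F + (m-1)f^*\xi_0 + G$ with $G = \lceil (\Delta - \tfrac{1}{m} F)^-\rceil$, so that $D_0 \sim_{\mathbb Q} K_X + \Delta - \tfrac{1}{m} F + \tfrac{m-1}{m}M + G$ is an adjoint divisor with SNC boundary to which Theorem \ref{thm3.4} applies directly, with no covering construction. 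The two things your proposal leaves unproved are exactly the content of the paper's Steps 2--4: (a) the auxiliary locus must contain the chosen root $\xi_0$ itself, verified by $h^0(D_0 + f^*\xi_0) = h^0(D - F + f^*\xi + G) \geq j$; and (b) one needs the containment $S^0_j(D_0,f) + (m-1)\xi_0 \subset S^0_j(D,f)$, which follows from the effectivity of $F - G$. Only with both in hand does the identity $B + q + (m-1)\xi_0 = B + mq$ (writing $\xi_0 = t+q$ with $t \in B$) produce a \emph{torsion} translate through $\xi$ inside $S^0_j(D,f)$. Your phrase ``descending through the direct summand relation and through $[m]$'' is precisely where these two verifications are hiding.

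Moreover, the cyclic-cover packaging does not close this gap as stated. A direct-summand relation gives $h^0(Z, \mathcal O_Z(D_Z)\otimes h^*\alpha) \geq h^0(X, (\text{summand})\otimes f^*\alpha)$, which lets you place $\eta$ \emph{into} the auxiliary locus on $Z$, but the inequality goes the wrong way for the return trip: knowing $h^0(Z, \mathcal O_Z(D_Z)\otimes h^*\alpha) \geq j$ for all $\alpha$ in a torsion translate does not bound below the $h^0$ of the single relevant summand, so you cannot conclude that this whole translate lands in $S^0_j(D,f)$ after applying $[m]$ or translating by $(m-1)\eta$. There is also a structural trap to avoid: any argument that only exhibits a torsion translate through the origin of the shifted locus $S^0_j(D,f) - \xi$ proves nothing, because translating back by the a priori non-torsion $\xi$ destroys the torsion structure; the whole point of the theorem is that $\xi$ is torsion modulo an abelian subvariety sitting inside $S^0_j(D,f)$. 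The paper's additive trick with $D_0$ and the $(m-1)f^*\xi_0$ twist is exactly the device that avoids this trap, and it (or an equivalent substitute) must appear explicitly for the proof to be complete.
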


\begin{proof}
Fix an element $\xi \in S^0_j(D, f)$. It is sufficient to show that 
there is a torsion translate of an abelian subvariety $T \subset \Pic ^0(A)$ 
such that $\xi \in T \subset S^0_j(D, f)$. 
This is because, once we show that 
$S^0_j(D, f)$ is a union of (possibly infinitely many) torsion translates of 
abelian subvarieties of $\Pic^0(A)$, 
then it follows that $S^0_j(D, f)$ is a union of finitely many of them 
since $S^0_j(D, f)$ and all torsion translates of abelian subvarieties are 
Zariski closed subsets of $\Pic^0(A)$, and there are only countably many of 
torsion translates of abelian subvarieties. 

\setcounter{step}{0}
\begin{step}
Take a projective birational morphism $\mu : X' \to X $
satisfying the following conditions: 

(i) $X'$ is a smooth projective variety; 

(ii) $K_{X'} + \Delta' = \mu(K_X + \Delta) + E$, 
where $\Delta'$ is a boundary $\mathbb Q$-divisor on $X'$ 
and $E$ is an effective and $\mu$-exceptional Cartier divisor on $X'$; 

(iii) $\mu^* |D + f^*\xi|= |V| + F$, 
where $|V|$ is a free linear system and 
$F$ is an effective Cartier divisor on $X'$; 

(iv) $\Exc (\mu) \cup \Supp \Delta' \cup \Supp F$ 
is a simple normal crossing divisor on $X'$. 

$D' = \mu^*D+mE$ is a Cartier divisor on $X'$ 
satisfying that 
$$ D' \sim_{\mathbb Q} m\mu^*(K_X+\Delta)+mE
=m(K_{X'}+\Delta').$$ 
For any $\eta \in \Pic ^0(A)$, 
$h^0(X', \mathcal O_{X'}(D'+\mu^*f^*\eta)) = 
h^0(X, \mathcal O_X(D+f^*\eta))$.
Therefore $S^0_j(D', f \circ \mu)=S^0_j(D,f)$.
Furthermore, 
\begin{align*}
|D'+\mu^*f^*\xi| &= |\mu^*(D+f^*\xi)+mE|
\\ &= \mu^*|D+f^*\xi| + mE 
\\ &= |V|+F+mE.
\end{align*}
By replacing $(X, \Delta, D)$ with $(X', \Delta', D')$, 
we can assume that the following conditions hold: 

$X$ is smooth,  
$|D+f^*\xi| = |V|+F$ for some free linear system $|V|$ and some 
effective Cartier divisor $F$, and $\Supp \Delta \cup \Supp F$ is 
a simple normal crossing divisor. 
\end{step}

\begin{step}
Since $\xi \in S^0_j(D, f)$, $\dim |V| \geq j-1$. 
Take a general smooth divisor $M \in |V|$ 
with no irreducible component 
contained in $\Supp \Delta \cup \Supp F$. 
We can take $\xi_0 \in \Pic ^0(A)$ such that $\xi = m\xi_0$ in $\Pic ^0(A)$.  
Then 
\begin{align*}
m(K_X+\Delta+f^*\xi_0) 
&= m(K_X+\Delta)+f^*\xi \\ 
&\sim_{\mathbb Q} D+f^*\xi \\
&\sim M+F. 
\end{align*}
Therefore 
$$ K_X+\Delta+f^*\xi_0 \sim_{\mathbb Q} \frac{1}{m}(M+F). $$ 
Then we have 
\begin{align*}
D-F+(m-1)f^*\xi_0 &\sim_{\mathbb Q} m(K_X+\Delta)-F+(m-1)f^*\xi_0
\\ &= K_X+\Delta-F+(m-1)(K_X+\Delta+f^*\xi_0)
\\ &\sim_{\mathbb Q} K_X+\Delta-\frac{1}{m}F+\frac{m-1}{m}M.
\end{align*}
Set 
$$ G=\left\lceil \left(\Delta-\frac{1}{m}F\right)^- \right\rceil \geq 0$$
and
$$ D_0=D-F+(m-1)f^*\xi_0+G.$$
Then 
$$ D_0 \sim_{\mathbb Q} 
K_X+\Delta-\frac{1}{m}F +\frac{m-1}{m}M + G, $$
and 
$$ \Delta-\frac{1}{m}F +\frac{m-1}{m}M + G$$
is a boundary $\mathbb Q$-divisor 
with simple normal crossing support. 
By Theorem \ref{thm3.4}, $S^0_j(D_0, f)$ is a finite union of 
torsion translates of abelian subvarieties. 
Moreover,  
\begin{align*}
h^0(X, \mathcal O_X(D_0+f^*\xi_0)) &= h^0(X, \mathcal O_X(D-F+f^*\xi+G))
\\ &\geq h^0(X,\mathcal O_X(D-F+f^*\xi))
\\ &= \dim |V|+1 = j.
\end{align*}
Therefore we have $\xi_0 \in S^0_j(D_0,f)$. 
\end{step}

\begin{step}
Here we will prove that 
$$S^0_j(D_0,f)+(m-1)\xi_0 \subset S^0_j(D,f).$$

Take $\alpha \in S^0_j(D_0,f)$. 
Then $D+f^*(\alpha+(m-1)\xi_0)=D_0+F-G+f^*\alpha$. 
$F-G$ is obviously effective since $\Delta$ is effective. 
So we have 
$$h^0(X, \mathcal O_X(D_0 + F - G + f^*\alpha)) 
\geq h^0(X, \mathcal O_X(D_0+f^*\alpha)) \geq j.$$ 
Thus $\alpha + (m-1)\xi_0 \in S^0_j(D,f)$, 
so it follows that $$S^0_j(D_0,f)+(m-1)\xi_0 \subset S^0_j(D,f).$$
\end{step}

\begin{step}
By Step 2, we can take 
an abelian subvariety $B \subset \Pic ^0(A)$ 
and a torsion point $q \in \Pic ^0(A)$ 
such that $\xi_0 \in B+q \subset S^0_j(D_0, f)$. 
By Step 3, $B+q+(m-1)\xi_0 \subset S^0_j(D, f)$. 
Represent $\xi_0 \in B+q$ as $\xi_0 = t+q$ 
for some $t \in B$, then 
$B+q+(m-1)\xi_0 = B+mq+(m-1)t=B+mq$. 
Therefore $\xi = \xi_0 + (m-1)\xi_0 
\in B+q+(m-1)\xi_0=B+mq \subset S^0_j(D,f)$.
\end{step}
\end{proof}

\begin{proof}[Proof of Corollary {\ref{thm_CKP}}]
By Theorem \ref{thm3.7}, 
we may assume that $\rho$ is a torsion element.
We take a positive integer $N$ such that $N \cdot \rho = 0$. 
Then $h^0(Nm(K_X+\Delta)) = h^0(N(m(K_X+\Delta)+\rho)) 
\geq h^0(m(K_X+\Delta)+\rho)$. 
\end{proof}

As another application of Theorem \ref{thm3.7}, 
we give an alternative proof of the following proposition in \cite{CKP}:

\begin{prop}[{\cite[Corollary 3.2]{CKP}}]\label{prop3.8}
Let $X$ be a normal projective variety, 
$\Delta$ be a boundary $\mathbb Q$-divisor on $X$ 
such that $K_X+\Delta$ is $\mathbb Q$-Cartier 
and $(X, \Delta)$ is log canonical, 
and $f:X \to A$ be a morphism to an abelian variety.
Then $\kappa(K_X+\Delta) \geq \kappa(K_X+\Delta +f^*\xi)$ 
for every $\xi \in \Pic^0(A)$. 
\end{prop}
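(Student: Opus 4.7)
The plan is to apply Theorem \ref{thm3.7} to reduce to the case of a torsion twist, and then combine this reduction with the multiplicative trick used in the proof of Corollary \ref{thm_CKP}. If $\kappa(K_X+\Delta+f^*\xi)=-\infty$ there is nothing to prove, so I set $k:=\kappa(K_X+\Delta+f^*\xi)\geq 0$. By the definition of Kodaira dimension there exist a positive integer $m_0$ with $m_0(K_X+\Delta)$ Cartier, a constant $c>0$, and an infinite set $I\subseteq m_0\mathbb{Z}_{>0}$ such that $h^0(X,m(K_X+\Delta+f^*\xi))\geq cm^k$ for every $m\in I$. Since $H^0(X,m(K_X+\Delta+f^*\xi))=H^0(X,\mathcal{O}_X(m(K_X+\Delta))\otimes f^*(m\xi))$, this is the statement that $m\xi\in S^0_{\lceil cm^k\rceil}(m(K_X+\Delta),f)$ for every $m\in I$.

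Next I would apply Theorem \ref{thm3.7} to $m(K_X+\Delta)$: the locus $S^0_{\lceil cm^k\rceil}(m(K_X+\Delta),f)$ is a finite union of torsion translates of abelian subvarieties of $\Pic^0(A)$. Hence for each $m\in I$ there exist an abelian subvariety $B_m\subseteq\Pic^0(A)$ and a torsion point $q_m\in\Pic^0(A)$ with $m\xi\in B_m+q_m\subseteq S^0_{\lceil cm^k\rceil}(m(K_X+\Delta),f)$. Since $0\in B_m$, the point $q_m$ itself lies in this locus, so $h^0(X,m(K_X+\Delta)+f^*q_m)\geq cm^k$. Let $N_m$ denote the order of $q_m$ in $\Pic^0(A)$. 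Following the multiplicative trick from the proof of Corollary \ref{thm_CKP}, pick any nonzero $t\in H^0(X,m(K_X+\Delta)+f^*q_m)$ and use the map $s\mapsto s\cdot t^{N_m-1}$, which is an injection $H^0(X,m(K_X+\Delta)+f^*q_m)\hookrightarrow H^0(X,N_m m(K_X+\Delta))$ (valid because $(f^*q_m)^{\otimes N_m}\cong\mathcal{O}_X$); this yields $h^0(X,N_m m(K_X+\Delta))\geq cm^k$.

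The main obstacle is to pass from this family of inequalities, in which $N_m$ possibly depends on $m$, to an asymptotic estimate $h^0(X,m'(K_X+\Delta))\geq c'(m')^k$ cofinally in $m'$, which is what is needed to conclude $\kappa(K_X+\Delta)\geq k$. If the orders $N_m$ stay bounded by a single constant $N_0$ along a cofinal subset of $I$, then with $m':=N_m m$ the estimate above reads $h^0(X,m'(K_X+\Delta))\geq cN_0^{-k}(m')^k$ for cofinally many $m'$, which gives the desired inequality. Establishing such uniform control on $N_m$ is the delicate point; it should follow from a refined analysis of the construction in the proof of Theorem \ref{thm3.7}, since the torsion points $q_m$ arise from Theorem \ref{thm3.4} applied to log-smooth models built from the moving and fixed parts of $|m(K_X+\Delta+f^*\xi)|$, and these models stabilize once $m$ is sufficiently divisible for the Iitaka fibration of $K_X+\Delta+f^*\xi$ to be realized, so only finitely many torsion translates should appear for all such $m$.
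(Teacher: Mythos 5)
Your overall strategy is the same as the paper's: realize the hypothesis as $m\xi\in S^0_{j}(m(K_X+\Delta),f)$ for appropriate $j$, invoke Theorem \ref{thm3.7} to place $m\xi$ in a torsion translate contained in that locus, replace $m\xi$ by a torsion point of that translate, and then kill the torsion by multiplying sections. You have also correctly located the crux: one needs the orders of the torsion points to be bounded uniformly in $m$. But that is precisely the step you do not prove, and your suggested route to it does not work as stated. The resolutions and decompositions $\mu^*|D+f^*\xi|=|V|+F$ used in the proof of Theorem \ref{thm3.7} depend on both $m$ and $\xi$, and nothing in that proof gives any control on which torsion translates appear or on the orders of their torsion points; ``stabilization of the models once $m$ is sufficiently divisible'' is an unproved (and, as far as that proof goes, unmotivated) assertion, so as written your argument has a genuine gap. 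A secondary issue: you should take the torsion point of \emph{minimal} order in $T_m=B_m+q_m$ rather than an arbitrary representative $q_m$, since $\ord(q_m)$ can be arbitrarily large even when $T_m$ contains a torsion point of small order.

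The paper fills the gap with a self-contained, purely group-theoretic statement about abelian varieties (Lemma \ref{lem3.9}): setting $\ord(T)=\min\{\ord(t)\mid t\in T_{\tor}\}$ for a torsion translate $T$, one has (i) $\ord(mT)\leq\ord(T)\leq m\cdot\ord(mT)$, and (ii) for a fixed point $p$, the set $\{\ord(T)\mid T\in\mathcal T,\ mp\in T\ \text{for some}\ m>0\}$ is bounded. Part (ii) is proved by contradiction: if the orders were unbounded, one could build a sequence of torsion translates $T_k$ containing $m_1\cdots m_k p$ with strictly decreasing dimensions, by intersecting a rescaled new translate of large order with a rescaled old one and using (i) to see the intersection is proper; this cannot continue indefinitely. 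With this lemma one gets a single $N$ with $Nt_m=0$ for all $m$, whence $d'(m)\leq h^0(mD+f^*t_m)\leq h^0(NmD)=d(Nm)$ and the inequality of Kodaira dimensions follows from $\limsup_m \log d(Nm)/\log(Nm)=\kappa(D)$, exactly as in the final display of your proposal. This lemma is the missing ingredient you would need to supply.
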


In \cite{CKP}, this is proved as a corollary of \cite[Theorem 0.1]{CKP} (see 
Corollary \ref{thm_CKP}). 

First we prove the following lemma:

\begin{lem}\label{lem3.9}
Let $A$ be an abelian variety and $\mathcal T$ be the set of all torsion translates 
of abelian subvarieties of $A$. For $T \in \mathcal T$, 
set $\ord(T) = \min \{  \ord(t) \mid t \in T_{\tor} \}$, where 
$T_{\tor}$ denotes the set of all torsion points in $T$. 

{\rm (i)} $\ord(mT) \leq \ord(T) \leq m \cdot \ord(mT)$ for any $T \in \mathcal T$ and 
$m \in \mathbb Z_{>0}$. 

{\rm (ii)} For any $p \in A$, the set 
$\{ \ord(T) \mid T \in \mathcal T\ \mathrm{and}\ mp \in T\ 
\mathrm{for\ some\ }m \in 
\mathbb Z_{>0} \}$ is bounded. 
\end{lem}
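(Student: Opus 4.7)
The plan is to handle (i) by a direct torsion-order computation and to handle (ii) by extracting, from the data of $p$ alone, a single integer $N$ that bounds $\ord(T)$ uniformly over all admissible $T$.

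For (i), I will write $T = B + q$ with $B$ an abelian subvariety and $q$ torsion, so that $mT = B + mq$ and the restriction $[m]|_T \colon T \to mT$ is surjective (since $[m] \colon B \to B$ is surjective on any abelian subvariety). Taking $t \in T_{\tor}$ realizing $\ord(T)$, the element $mt \in (mT)_{\tor}$ has order dividing $\ord(t)$, giving $\ord(mT) \leq \ord(T)$. Conversely, lifting a minimum-order $s \in (mT)_{\tor}$ to some $t \in T$ with $mt = s$, the relation $m \cdot \ord(s) \cdot t = 0$ forces $t$ to be torsion with $\ord(t) \leq m \cdot \ord(s)$, which yields $\ord(T) \leq m \cdot \ord(mT)$.

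For (ii), I will first establish the clean identity $\ord(T) = \ord(\bar q \mid A/B)$, where $\bar q$ denotes the image of $q$ in $A/B$. This rests on the surjectivity of $A[n] \to (A/B)[n]$ for every $n$: given $\xi \in (A/B)[n]$, any lift $\xi' \in A$ satisfies $n\xi' \in B$, and surjectivity of $[n] \colon B \to B$ (divisibility of the abelian variety $B$) lets me subtract an $n$-th root in $B$ to obtain a torsion lift of $\xi$ of order dividing $n$.

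Next I will introduce the Zariski closure $H := \overline{\mathbb Z \cdot p} \subset A$ with identity component $H^0$, an abelian subvariety of $A$, and set $N := [H : H^0]$, so that the image $\bar p$ of $p$ in $A/H^0$ is torsion of order exactly $N$. The key claim is that every $T = B + q \in \mathcal T$ containing some $mp$ satisfies $H^0 \subset B$, and therefore $\ord(T) \leq N$. Indeed, $mp \in B + q$ with $q$ torsion forces $m\bar p = \bar q \in (A/B)_{\tor}$, so $\bar p$ itself is torsion in $A/B$, of some order $d$; then $dp \in B$, and $H_d := \overline{\mathbb Z \cdot dp} = [d](H)$ (using that $[d]$ is proper on $A$, hence closed). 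Because $[d] \colon H^0 \to H^0$ is surjective, I obtain $H^0 = [d](H^0) \subset H_d \subset B$. The induced surjection $A/H^0 \twoheadrightarrow A/B$ sends $\bar p$ (of order $N$) to the image of $\bar p$ in $A/B$, so $\ord(\bar p \mid A/B)$ divides $N$, and the preliminary identity then gives $\ord(T) = \ord(m\bar p \mid A/B) \leq \ord(\bar p \mid A/B) \leq N$.

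The step I expect to require the most care is the containment $H^0 \subset B$, which combines the identity $H_d = [d](H)$ (closedness of $[d]$) with the surjectivity of $[d]$ on the abelian subvariety $H^0$; both are genuine uses of the abelian variety structure, whereas everything else reduces to formal manipulation of torsion orders and divisibility.
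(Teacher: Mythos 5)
Your proof is correct, and for part (ii) it takes a genuinely different route from the paper. Part (i) is essentially the paper's argument (indeed you are slightly more careful: the paper takes an arbitrary $t \in T_{\tor}$ and concludes $\ord(T) \leq m\ord(mT)$, which implicitly needs the surjectivity of $t \mapsto mt$ from $T_{\tor}$ onto $(mT)_{\tor}$ that you make explicit via divisibility of $B$). For part (ii), the paper argues by contradiction: assuming unboundedness, it builds a sequence of elements $T_k \in \mathcal T$ with $m_1\cdots m_k p \in T_k$ and $\dim T_1 > \dim T_2 > \cdots$, each new term obtained by intersecting $m_{k+1}T_k$ with a suitable multiple of a translate of larger order and invoking part (i); the strictly decreasing dimensions force the process to terminate, a contradiction. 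You instead give a direct, quantitative argument: the identity $\ord(B+q) = \ord(\bar q \mid A/B)$ (via surjectivity of $A[n] \to (A/B)[n]$), together with the containment $H^0 \subset B$ for $H = \overline{\mathbb Z\cdot p}$ (via $H_d = [d](H)$ and surjectivity of $[d]$ on $H^0$), yields the explicit uniform bound $\ord(T) \leq N = [H:H^0]$. Your approach buys an effective bound depending only on $p$ and avoids the infinite-descent bookkeeping; the paper's approach avoids the structure theory of $\overline{\mathbb Z\cdot p}$ and uses only part (i) plus the fact that a proper closed subvariety of an irreducible variety has smaller dimension. Both are valid; yours is arguably cleaner and more informative.
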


\begin{proof}
(i) By definition, $\ord(t_0)=\ord(T)$ for some $t_0 \in T$. Then 
$$\ord(mT) \leq \ord(mt_0) \leq \ord(t_0) = \ord(T).$$

Take $t \in T_{\tor}$. 
Then $\ord(T) \leq \ord(t) \leq m\ord(mt)$ since 
$$m\ord(mt)\cdot t = 
\ord(mt) \cdot mt=0.$$
So $\ord(T) \leq m\ord(mT)$. 

(ii) Suppose that 
$\{ \ord(T) \mid T \in \mathcal T\ \mathrm{and} 
\ mp \in T\ \mathrm{for\ some\ }m \in 
\mathbb Z_{>0} \}$ is unbounded. 
Consider a sequence
$S_k = (m_1,\cdots,m_k; T_1,\cdots,T_k)$ satisfying that 
\begin{itemize}
\item $m_1,\cdots,m_k \in \mathbb Z_{>0}$, $T_1,\cdots,T_k \in \mathcal T$, 
\item $m_1p \in T_1,\ m_1m_2p \in T_2,\cdots,m_1\cdots m_kp \in T_k$, 
\item $\dim T_1 > \dim T_2 > \cdots > \dim T_k$. 
\end{itemize}
By assumption, we can take $m_{k+1} \in \mathbb Z_{>0}$ and $T''_{k+1} \in 
\mathcal T$ such that $m_{k+1}p \in T''_{k+1}$ and $\ord(T''_{k+1}) > 
m_1 \cdots m_k \ord(T_k)$. 
We define $T'_{k+1}, T'_k \in \mathcal T$ as $T'_{k+1} = m_1 \cdots m_k 
T''_{k+1}$ and $T'_k = m_{k+1}T_k$. 
Then we have 
$$\ord(T'_{k+1}) \geq \frac{\ord(T''_{k+1})}{m_1 \cdots m_k} > \frac{m_1 \cdots m_k 
\ord(T_k)}{m_1\cdots m_k} = \ord(T_k) \geq \ord(T'_k),$$
using (i). 
Put $T_{k+1} = T'_{k+1} \cap T'_k \in \mathcal T$. 
Then $m_1 \cdots m_{k+1}p \in T_{k+1}$. 
Since $\ord(T'_{k+1}) > \ord(T'_k)$, $T'_k \not\subset T'_{k+1}$. 
Then it follows that $T_{k+1} \subsetneq T'_k$. 
Therefore $\dim T_{k+1} < \dim T'_k = \dim T_k$. 

By continuing this process, we have $S_1, S_2, \cdots, S_k, S_{k+1}, \cdots$. 
But this process must stop because of the third condition of $S_k$.  
So we have a contradiction. 
\end{proof}

\begin{proof}[Proof of Proposition \ref{prop3.8}]
Let $\mathcal T$ be the set of all torsion translates of abelian subvarieties of 
$\Pic^0(A)$. 
There exists a positive integer $m_0$ such that $D = m_0(K_X + \Delta)$ is 
Cartier. 
Then it reduces to show that $\kappa(D) \geq \kappa(D+f^*\xi)$ for any $\xi \in 
\Pic^0(A)$. 
Set $d(m) = h^0(mD)$ and $d'(m) = h^0(m(D+f^*\xi))$. 
Then $m\xi \in S^0_{d'(m)}(mD, f)$, so by Theorem \ref{thm3.7}  
we can take $T_m \in \mathcal T$ 
such that $m\xi \in T_m \subset S^0_{d'(m)}(mD, f)$. 
Take $t_m \in T_m$ such that $\ord(t_m) = \ord(T_m)$. 
It follows by Lemma \ref{lem3.9} (ii) that 
there exists a positive integer $N$ satisfying that $\ord(T_m) \mid N$ for any 
$m \in \mathbb Z_{>0}$. 
Hence $N t_m=0$ for any $m \in \mathbb Z_{>0}$. 
Then we have 
\begin{align*}
d'(m)   &= h^0(m(D+f^*\xi)) \\
          &\leq h^0(mD+f^*t_m) \\
          &\leq h^0(N(mD+f^*t_m)) \\
          &= h^0(NmD) = d(Nm). 
\end{align*}
Therefore 
\begin{align*}
\kappa(D+f^*\xi) &= \limsup_m \frac{\log d'(m)}{\log m} \\
                          &\leq \limsup_m \frac{\log d(Nm)}{\log m} \\
                          &= \limsup_m \frac{\log d(Nm)}{\log Nm} = \kappa(D). 
\end{align*}
\end{proof}

\section{Higher direct images of pluricanonical bundles}\label{sec4}

In this section, we discuss higher direct images of pluricanonical bundles 
to abelian varieties. 

In \cite{PoSc}, 
Popa and Schnell proved the following theorem: 

\begin{thm}[{\cite[Theorem 1.10]{PoSc}}]\label{thm4.1}
Let $X$ be a smooth projective variety 
and $f:X \to A$ be a morphism to an abelian variety.
Then the direct image $f_*(\omega_X^{\otimes m})$ 
is a GV-sheaf for any positive integer $m$. 
\end{thm}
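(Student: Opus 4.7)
The plan is to follow Popa--Schnell's strategy, reducing the GV-property of $f_*\omega_X^{\otimes m}$ to the effective Kodaira-type vanishing of Theorem \ref{thm_PoSc2}. The first step is to recall a Fourier--Mukai characterization of GV-sheaves, due to Hacon and refined by Pareschi--Popa (see \cite{Hac}, \cite{PaPo}): a coherent sheaf $\mathcal F$ on $A$ is a GV-sheaf if and only if a uniform higher cohomology vanishing holds for $\mathcal F$ (or for its pullback by a suitable isogeny) twisted by a bundle arising from the Fourier--Mukai transform of an ample line bundle on $\hat A$. Unwinding this data, the condition becomes an effective vanishing on an abelian variety $A'$ (an isogenous cover of $A$) for $\alpha^*\mathcal F \otimes L^{\otimes k}$, where $\alpha: A'\to A$ is the isogeny and $L$ is an ample line bundle on $A'$, with an explicit lower bound on $k$ that makes the statement nontrivial (i.e.\ so that plain Serre vanishing does not suffice).

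Applying this criterion to $\mathcal F = f_*\omega_X^{\otimes m}$, I form the fibre product $X' = X \times_A A'$ and denote the natural maps by $\pi: X' \to X$ and $f': X' \to A'$. Since $\alpha$ is \'etale, so is $\pi$, whence $\omega_{X'} = \pi^*\omega_X$, and flat base change gives
$$\alpha^*\bigl(f_*\omega_X^{\otimes m}\bigr) \cong f'_*\omega_{X'}^{\otimes m}.$$
The required vanishing therefore becomes
$$H^i\bigl(A', f'_*\omega_{X'}^{\otimes m} \otimes L^{\otimes k}\bigr) = 0 \qquad (i > 0),$$
for the appropriate range of $k$, which is a statement about a pluricanonical pushforward to the abelian variety $A'$ twisted by a power of an ample line bundle.

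At this point I would invoke Theorem \ref{thm_PoSc2} directly. Since any ample line bundle on an abelian variety has its square globally generated, after replacing $L$ by a positive power one may assume $L$ is ample and globally generated on $A'$. The theorem, applied to $f': X' \to A'$, then delivers the needed vanishing as soon as $k \geq m(\dim A' + 1) - \dim A'$, which exactly matches the threshold required by the criterion of Step 1. Together, these steps confirm that $f_*\omega_X^{\otimes m}$ is a GV-sheaf.

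The principal obstacle is the first step: pinning down a form of the GV-criterion whose vanishing condition precisely matches the hypotheses of Theorem \ref{thm_PoSc2} (ample and globally generated on the target, with the effective threshold $m(n+1)-n$), so that no loss of information occurs when comparing them. Once this is in place, the remaining base change and the appeal to the effective vanishing are essentially formal, which is why Theorem \ref{thm_PoSc2} serves as the key input.
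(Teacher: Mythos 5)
The paper does not actually prove Theorem \ref{thm4.1}: it is quoted verbatim from \cite[Theorem 1.10]{PoSc}, and the text only indicates (in the introduction and in Remark \ref{rem4.7}) that it follows from Theorem \ref{thm_PoSc2} by the argument of \cite{PoSc}. Your proposal reconstructs exactly that argument, so the route is the intended one; the problem is that the single step you defer --- matching the GV-criterion to the effective threshold of Theorem \ref{thm_PoSc2} --- is precisely where the content lies, and as written you do not close it.

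Concretely: the criterion available in the paper is Theorem \ref{thm4.3}, which demands $H^k(B,\phi^*\mathcal F\otimes L)=0$ for every isogeny $\phi\colon B\to A$ and every ample $L$ on $B$, i.e.\ for the \emph{first} power of an arbitrary ample line bundle. After your (correct) base change this asks for $H^i(A', f'_*\omega_{X'}^{\otimes m}\otimes L)=0$, whereas Theorem \ref{thm_PoSc2} only yields vanishing for $L^{\otimes l}$ with $l\ge m(\dim A'+1)-\dim A'$, which exceeds $1$ as soon as $m\ge 2$. Your Step 3 asserts that this threshold ``exactly matches the threshold required by the criterion of Step 1,'' but Step 1 never produces such a threshold, and replacing $L$ by a power (to gain global generation or positivity) changes the statement to be proved rather than proving it. The missing ingredient is a bootstrapping device, and there are two standard ones: (a) use the Fourier--Mukai form of the criterion from \cite{Hac} and \cite{PaPo}, in which the ample line bundle on $\hat{A}$ may be taken to be an arbitrarily high power $M^{\otimes l}$ of a fixed one, so that pulling back by the associated isogeny turns the twisting object into a direct sum of copies of $M^{\otimes l}$ itself; or (b) keep Theorem \ref{thm4.3} as stated and precompose with the multiplication isogeny $[b]\colon B\to B$, using that $[b]^*L$ is algebraically equivalent to $L^{\otimes b^2}$, that $H^i(B,\mathcal G)$ is a direct summand of $H^i(B,[b]^*\mathcal G)$ because $\mathcal O_B$ splits off $[b]_*\mathcal O_B$, and choosing $b^2\ge m(n+1)-n$ (the discrepancy in $\Pic^0(B)$ is absorbed using divisibility of $\Pic^0(B)$, and global generation is arranged by first passing to $L^{\otimes 2}$ inside the available margin). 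Either device reduces the claim to Theorem \ref{thm_PoSc2}; without one of them the argument is incomplete at exactly the point you flagged.
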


In addition, we can prove a log version of Theorem \ref{thm_Hac}:

\begin{prop}\label{prop4.2}
Let $X$ be a smooth projective variety, 
$\Delta$ be a boundary $\mathbb Q$-divisor on $X$ 
with simple normal crossing support, 
$f:X \to A$ be a morphism to an abelian variety 
and $D$ be a Cartier divisor on $X$ 
such that $D \sim _{\mathbb Q} K_X + \Delta$. 
Then the higher direct images $R^jf_*\omega_X(\Delta)$ are GV-sheaves for any $j$.
\end{prop}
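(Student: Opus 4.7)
The plan is to mimic Step 1 of the proof of Theorem \ref{thm3.4} to reduce to the case when $\Delta$ is a reduced simple normal crossing divisor, and then handle this case by adapting Hacon's proof of Theorem \ref{thm_Hac} to the logarithmic setting.

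For the reduction, I take the cyclic cover $\pi : Y \to X$ of degree $N$ associated to $NC \sim N\{\Delta\}$ (where $C = D - K_X - \lfloor \Delta \rfloor$) and a log resolution $\mu : Y' \to Y$ exactly as in Step 1 of the proof of Theorem \ref{thm3.4}, obtaining a smooth projective $Y'$ with reduced SNC divisor $\Delta_{Y'}^{=1}$. The decomposition of $\pi_* \mathcal{O}_Y(-\pi^* \lfloor \Delta \rfloor)$ already used there exhibits $\mathcal{O}_X(-(D - K_X))$ as a direct summand; applying Grothendieck duality for the finite morphism $\pi$ then shows that $\mathcal{O}_X(D)$ is a direct summand of $\pi_* \omega_Y(\pi^* \lfloor \Delta \rfloor)$. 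Combined with the identifications $\mu_* \mathcal{O}_{Y'}(K_{Y'} + \Delta_{Y'}^{=1}) \cong \omega_Y(\pi^* \lfloor \Delta \rfloor)$ and $R^k \mu_* \mathcal{O}_{Y'}(K_{Y'} + \Delta_{Y'}^{=1}) = 0$ for $k > 0$ (both consequences of relative log vanishing in the style of Fujino--Kov\'acs, together with the rationality of the singularities of cyclic covers ramified along SNC divisors), each $R^j f_* \mathcal{O}_X(D)$ becomes a direct summand of $R^j(f \circ \pi \circ \mu)_* \mathcal{O}_{Y'}(K_{Y'} + \Delta_{Y'}^{=1})$. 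Since direct summands of GV-sheaves are GV, this reduces the proof to the case of a reduced SNC divisor.

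In that case, the strategy is to follow Hacon's Fourier--Mukai proof of Theorem \ref{thm_Hac} with $\omega_X$ replaced by $\mathcal{O}_X(K_X + \Delta)$. Using the Pareschi--Popa characterization of GV-sheaves together with the logarithmic analogue of Koll\'ar's decomposition theorem, which splits $Rf_* \mathcal{O}_X(K_X + \Delta)$ as $\bigoplus_j R^j f_* \mathcal{O}_X(K_X + \Delta)[-j]$ in the derived category of $A$, the GV property of each $R^j f_* \mathcal{O}_X(K_X + \Delta)$ reduces to a Kodaira-type vanishing on $X$ for $\mathcal{O}_X(K_X + \Delta)$ twisted by pullbacks of sufficiently positive line bundles coming from isogenies between $A$ and $\hat{A}$. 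This vanishing is a direct consequence of the logarithmic Kawamata--Viehweg vanishing theorem applied to the pair $(X, \Delta)$, once the standard isogeny trick is used to render the pullback sufficiently positive.

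The main technical obstacle is this second step: carrying out Hacon's Fourier--Mukai and isogeny argument faithfully in the logarithmic setting while invoking the log version of Koll\'ar's decomposition to separate the individual Leray degrees. Both ingredients are available in the literature, and once they are in place, Hacon's original strategy transposes cleanly with $\omega_X$ replaced by $\mathcal{O}_X(K_X + \Delta)$.
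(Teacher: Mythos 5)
The paper's proof is far shorter than your plan: it applies Hacon's criterion (Theorem \ref{thm4.3}) directly. For an isogeny $\phi\colon B\to A$ and an ample $L$ on $B$, flat base change on the fibre product $Y=X\times_AB$ (with projections $\psi\colon Y\to X$ and $g\colon Y\to B$) gives $\phi^*R^jf_*\omega_X(\Delta)\cong R^jg_*\omega_Y(\psi^*\Delta)$, and the Ambro--Fujino vanishing theorem applied to the log canonical pair $(Y,\psi^*\Delta)$ yields $H^k(B,R^jg_*\omega_Y(\psi^*\Delta)\otimes L)=0$ for $k>0$. Neither the cyclic-cover reduction to a reduced simple normal crossing boundary nor any decomposition of $Rf_*$ is needed, because the Ambro--Fujino theorem already covers arbitrary lc pairs with fractional coefficients.

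The genuine gap in your proposal is in the second step. What you must ultimately prove is $H^k(B,R^jg_*\omega_Y(\psi^*\Delta)\otimes L)=0$ for $k>0$, i.e.\ a Koll\'ar-type vanishing for \emph{higher direct images}. This cannot be deduced from ``a Kodaira-type vanishing on $X$ for $\mathcal O_X(K_X+\Delta)$ twisted by pullbacks of sufficiently positive line bundles'': the line bundle $g^*L$ is only nef and semi-ample on $Y$, not big unless $f$ is generically finite, so logarithmic Kawamata--Viehweg does not apply, and no isogeny trick changes this. Moreover, the Koll\'ar-type splitting $Rg_*\cong\bigoplus_jR^jg_*[-j]$ points the wrong way for your purpose: it expresses $H^i(Y,\omega_Y(\psi^*\Delta)\otimes g^*L)$ as $\bigoplus_jH^{i-j}(B,R^jg_*\omega_Y(\psi^*\Delta)\otimes L)$, and this total cohomology is in general nonzero for $i>0$ (the summand with $j=i$ contributes an $H^0$ term), so no vanishing statement on the total space can isolate and kill the summands with $i-j>0$. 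The missing ingredient is precisely the Ambro--Fujino (relative Koll\'ar-type) vanishing theorem for lc pairs; once it is invoked, both your cyclic-cover reduction and the decomposition become superfluous. (The reduction in your first step is in principle workable via duality for the finite map $\pi$, but it only adds complications --- one must handle the singularities of the normalized cyclic cover and the duals of the Fujino--Kov\'acs vanishing statements --- and it buys nothing, since the vanishing theorem one needs at the end already allows non-reduced boundaries.)
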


Here we use a characterization of GV-sheaves on abelian varieties by Hacon. 
He proved the following criterion in \cite{Hac}:

\begin{thm}[Hacon]\label{thm4.3}
Let $A$ be an abelian variety and $\mathcal F$ be a coherent sheaf on $A$. 
$\mathcal F$ is a GV-sheaf on $A$ if and only if 
$H^k(B, \phi^*\mathcal F \otimes L)=0$ for $k>0$, 
where $\phi:B \to A$ is any isogeny and $L$ is any ample line bundle on $B$. 
\end{thm}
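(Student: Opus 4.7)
My plan is to translate both directions through the Fourier--Mukai transform. Recall from \ref{say2.3} that $\mathcal F$ is GV if and only if
\[
\codim_{\hat A}\Supp R^i\Phi_{\mathcal P_A}(\mathcal F)\geq i\quad\text{for all }i>0.
\]
Combined with Mukai's equivalence and the Kodaira-type vanishing for ample line bundles on abelian varieties, both directions reduce to spectral-sequence bookkeeping on the dual sides $\hat A$ and $\hat B$.

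For the forward direction, I first claim that an isogeny $\phi:B\to A$ preserves the GV property. The compatibility $(\phi\times\id)^*\mathcal P_A\cong(\id\times\hat\phi)^*\mathcal P_B$ together with flat base change gives $R\Phi_{\mathcal P_B}(\phi^*\mathcal F)\cong\hat\phi^*R\Phi_{\mathcal P_A}(\mathcal F)$, and since $\hat\phi$ is \'etale the codimension of $\Supp R^i\Phi$ is unchanged. It then suffices to prove: if $\mathcal G$ on $B$ is GV and $L$ is ample on $B$, then $H^k(B,\mathcal G\otimes L)=0$ for $k>0$. Kodaira vanishing on $B$ gives $H^i(B,L^{-1}\otimes Q)=0$ for $i<g$ and all $Q\in\Pic^0(B)$, so $R\Phi_{\mathcal P_B}(L^{-1})\cong\widehat{L^{-1}}[-g]$ for a locally free sheaf $\widehat{L^{-1}}$ on $\hat B$. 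Then Mukai's full faithfulness gives the Parseval-type identity
\[
R\Gamma(B,\mathcal G\otimes L)\;\cong\;R\operatorname{Hom}_{\hat B}\bigl(\widehat{L^{-1}},\,R\Phi_{\mathcal P_B}(\mathcal G)\bigr)[g],
\]
and the local-to-global $\operatorname{Ext}$ spectral sequence kills the positive-degree cohomology: any nonzero contribution to $\operatorname{Ext}^{k+g}$ with $k>0$ would require some $H^p(\hat B,(\widehat{L^{-1}})^\vee\otimes R^q\Phi_{\mathcal P_B}(\mathcal G))$ nonzero with $p+q=k+g$ and $p\leq\dim\Supp R^q\Phi_{\mathcal P_B}(\mathcal G)\leq g-q$, forcing $k\leq 0$.

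For the converse, I argue contrapositively. Suppose some $R^{i_0}\Phi_{\mathcal P_A}(\mathcal F)$ has support of codimension strictly less than $i_0$; I would construct an isogeny $\phi:B\to A$ and an ample $L$ on $B$ producing a nonzero $\operatorname{Ext}^{k+g}$ for some $k>0$ in the same formula. The failure of the codimension bound is preserved under $\hat\phi^*$, so $R^{i_0}\Phi_{\mathcal P_B}(\phi^*\mathcal F)$ has excess-dimensional support on $\hat B$. After passing to a sufficiently divisible isogeny cover, $\widehat{L^{-1}}$ can be made flexible enough that a term $H^p(\hat B,(\widehat{L^{-1}})^\vee\otimes R^{i_0}\Phi_{\mathcal P_B}(\phi^*\mathcal F))$ with $p>g-i_0$ is nonvanishing, and by choosing $(\phi,L)$ so that this sits at the extremal corner $p+q$ of the $E_2$-page (where $q=i_0$ is chosen maximal among indices violating the codimension bound), a standard extremal-term argument in the spectral sequence shows the class survives to $E_\infty$, giving $H^k(B,\phi^*\mathcal F\otimes L)\neq 0$ for $k=p+i_0-g>0$.

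The main obstacle is the converse: the forward implication follows cleanly from the Parseval identity together with the codimension bound, but the converse requires choosing $(\phi,L)$ delicately enough to \emph{detect} the failure of GV through a concrete nonvanishing, and controlling the spectral sequence differentials near the extremal corner. The two technical hinges are the compatibility of $R\Phi$ with isogeny pullback and the concentration of $R\Phi_{\mathcal P_B}(L^{-1})$ in a single degree for $L$ ample.
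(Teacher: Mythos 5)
The paper does not actually prove this statement: it is quoted as Hacon's criterion and attributed to \cite{Hac} (see also \cite{PaPo}), so there is no in-paper argument to compare against; I will assess your proposal on its own terms. Your forward direction is essentially the standard argument and is correct in substance. One typecheck slip: the dual isogeny goes $\hat\phi:\hat A\to\hat B$, so the compatibility reads $R\Phi_{\mathcal P_B}(\phi^*\mathcal F)\cong \hat\phi_*R\Phi_{\mathcal P_A}(\mathcal F)$ (pushforward, not pullback); since $\hat\phi$ is finite \'etale this still preserves the codimension of supports, so GV is preserved under isogeny pullback. The Parseval identity $R\Gamma(B,\mathcal G\otimes L)\cong R\Hom_{\hat B}(\widehat{L^{-1}},R\Phi_{\mathcal P_B}(\mathcal G))[g]$ and the degeneration of the resulting spectral sequence in degrees $p+q>g$ are fine, because $\widehat{L^{-1}}$ is locally free and $H^p$ of a sheaf vanishes above the dimension of its support.

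The converse, however, has a genuine gap, and it is located exactly where you flag the difficulty. First, you assert without argument that some $H^p(\hat B,(\widehat{L^{-1}})^{\vee}\otimes R^{i_0}\Phi_{\mathcal P_B}(\phi^*\mathcal F))$ with $p>g-i_0$ is nonzero after a suitable choice of $(\phi,L)$; this is plausible for $p=\dim\Supp R^{i_0}\Phi$ by Serre duality plus Serre vanishing on the support (sufficiently negative twists have nonzero top cohomology), but it must be proved. More seriously, the ``extremal corner survives'' step fails as stated: the incoming differentials into $E_r^{p,i_0}$ originate from $E_r^{p-r,\,i_0+r-1}=H^{p-r}(\hat B,(\widehat{L^{-1}})^{\vee}\otimes R^{i_0+r-1}\Phi)$, and maximality of $i_0$ among the violating indices does \emph{not} force these to vanish --- for instance $R^{g}\Phi$ is typically a nonzero punctual sheaf whose $H^0$ against $(\widehat{L^{-1}})^{\vee}$ grows with $L$, and in the boundary case $p=g-i_0+1$ the sources sit exactly on the allowed locus $p-r=g-(i_0+r-1)$. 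So your candidate class can a priori be a boundary, and nothing in the proposal rules this out. The known proofs do not attack the spectral sequence directly: they apply Serre--Grothendieck duality and Mukai's equivalence to convert the hypothesis ``$H^k(B,\phi^*\mathcal F\otimes L)=0$ for all $k>0$, all $(\phi,L)$'' into the statement that $R\Phi_{\mathcal P_A}\bigl(R\mathcal{H}om(\mathcal F,\mathcal O_A)\bigr)$ has no cohomology in degrees $<g$ (using that the bundles $\widehat{L}^{\vee}$ form an arbitrarily positive family as $\phi$ and $L$ vary, so Serre vanishing collapses the relevant hypercohomology), and then deduce the codimension inequalities $\codim\Supp R^i\Phi_{\mathcal P_A}(\mathcal F)\geq i$ from this WIT-type concentration by a local duality argument. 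Your converse would need to be rebuilt along those lines rather than by an extremal-term analysis.
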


\begin{proof}[Proof of Propotion \ref{prop4.2}]
Let $\phi: B \to A$ be an isogeny and $L$ be an ample line bundle on $B$. 
Set $Y = X \times_A B$. Let $\psi: Y \to X$ and $g: Y \to B$ be the natural 
morphisms. Fix an integer $j \geq 0$. 
By base change, 
$$\phi^*R^jf_*\omega_X(\Delta) \cong R^jg_*\psi^*\omega_X(\Delta) 
\cong R^jg_*\omega_Y(\psi^*\Delta).$$
Applying the Ambro--Fujino vanishing theorem (see \cite[Theorem 3.2]{Amb} 
and \cite[Theorem 6.3]{Fuj1}) 
to the log canonical pair 
$(Y, \psi^*\Delta)$, we have 
$$H^k(B, \phi^*R^jf_*\omega_X(\Delta) \otimes L) \cong 
H^k(B, R^jg_*\omega_Y(\psi^*\Delta) \otimes L)=0$$
for $k>0$. 
By Theorem \ref{thm4.3}, it follows that $R^jf_*\omega_X(\Delta)$ is a 
GV-sheaf. 
\end{proof}

In \cite{PoSc}, Popa and Schnell also asked the question  
whether the higher direct images $R^jf_*(\omega_X^{\otimes m})$ 
are GV-sheaves or not. 
Note that $R^jf_*\omega_X$ is a GV-sheaf for every $j$ 
(see Theorem \ref{thm_Hac} or Proposition \ref{prop4.2}).
Hence the question is whether $R^jf_*(\omega_X^{\otimes m})$ is 
a GV-sheaf or not in the case when $j \geq 1$ and $m \geq 2$.

Here we give an example of 
a higher direct image of a pluricanonical bundle 
which is not a GV-sheaf. 

\begin{lem}\label{lem4.4}
There exists an irregular smooth projective variety 
with big anti-canonical bundle. 
\end{lem}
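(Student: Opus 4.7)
The plan is to exhibit $X$ as an explicit projective bundle over an abelian variety. Choose an abelian variety $A$ of dimension $g \geq 2$ and an ample line bundle $\mathcal L$ on $A$; set $\mathcal E = \mathcal O_A \oplus \mathcal L$ and $X = \mathbb P(\mathcal E)$ with structure morphism $\pi : X \to A$ (Grothendieck convention, so that $\pi_*\mathcal O_X(1) = \mathcal E$). Then $X$ is a smooth projective variety of dimension $g+1$, and the two required properties follow from independent cohomology computations.

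For irregularity, I would invoke the standard facts $\pi_*\mathcal O_X = \mathcal O_A$ and $R^i\pi_*\mathcal O_X = 0$ for $i > 0$ on a projective bundle. The Leray spectral sequence then gives $h^1(X, \mathcal O_X) = h^1(A, \mathcal O_A) = g \geq 2 > 0$, so $X$ is irregular.

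For bigness of $-K_X$, the relative canonical bundle formula $\omega_{X/A} \cong \mathcal O_X(-2) \otimes \pi^*\det \mathcal E$ yields $-K_X \sim 2\xi - \pi^*c_1(\mathcal L)$, where $\xi = c_1(\mathcal O_X(1))$. Pushing forward and using $\operatorname{Sym}^{2m}(\mathcal O_A \oplus \mathcal L) \cong \bigoplus_{j=0}^{2m}\mathcal L^{\otimes j}$ gives
\[
\pi_*\mathcal O_X(-mK_X) \;\cong\; \operatorname{Sym}^{2m}\mathcal E \otimes \mathcal L^{-m} \;\cong\; \bigoplus_{k=-m}^{m}\mathcal L^{\otimes k}.
\]
Taking global sections and discarding the summands with $k<0$, which vanish since $\mathcal L$ is ample on an abelian variety,
\[
h^0(X, -mK_X) \;=\; \sum_{k=0}^{m} h^0(A, \mathcal L^{\otimes k}).
\]
By Riemann--Roch and Kodaira vanishing on $A$, $h^0(A, \mathcal L^{\otimes k}) = k^g(\mathcal L^g)/g!$ for $k \geq 1$, so the sum grows on the order of $m^{g+1} = m^{\dim X}$. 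This confirms $-K_X$ is big.

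There is no real obstacle; the only point requiring attention is the convention for $\mathbb P(\mathcal E)$. With the opposite convention (subbundles instead of quotients) one would replace the construction by $\mathbb P(\mathcal O_A \oplus \mathcal L^{-1})$ so that the push-forward of $\mathcal O_X(-mK_X)$ still produces positive tensor powers of the ample bundle $\mathcal L$. Once the convention is fixed, both claims reduce to standard facts about projective bundles and Riemann--Roch on abelian varieties.
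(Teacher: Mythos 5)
Your construction is the same as the paper's: the paper takes $X=\mathbb P_A(L^{-1}\oplus\mathcal O_A)$ over an abelian variety $A$ with $L$ ample, which is exactly your $\mathbb P(\mathcal O_A\oplus\mathcal L)$ up to the choice of projectivization convention, and the irregularity statement is handled identically. The only genuine difference is how bigness of $-K_X$ is verified. The paper argues numerically: $-K_X\equiv 2\xi+\pi^*c_1(L)$ with $\xi$ effective (since $H^0(\mathcal O_X(1))=H^0(A,E)\neq 0$), and decomposes this class as an effective class plus an ample class, which is a one-line criterion for bigness. You instead push forward, identify $\pi_*\mathcal O_X(-mK_X)\cong\bigoplus_{k=-m}^{m}\mathcal L^{\otimes k}$, and count sections via Riemann--Roch on $A$ to get growth of order $m^{\dim X}$. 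Both are correct; the paper's argument is shorter and avoids any Riemann--Roch input, while yours is more explicit (it even gives the leading coefficient of the volume) and sidesteps the small subtlety in the paper's proof of checking that $\tfrac{1}{N}2\xi+\pi^*c_1(L)$ is actually ample for large $N$. Your remark about the projectivization convention is exactly the right point of care, and your restriction to $g\geq 2$ is unnecessary (any $g\geq 1$ gives $q(X)>0$) but harmless.
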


\begin{proof}
Let $A$ be an abelian variety. We take an
ample line bundle $L$ on $A$ and define a vector bundle $E$ 
as the direct sum of $L^{-1}$ and $\mathcal O_A$. 
Let $\pi: X = \mathbb P_A(E) \to A$ be the projective bundle on $A$ 
associated to $E$. 
Clearly the irregularity $q(X)$ of $X$ is positive.
The canonical bundle $\omega_X$ is isomorphic to 
$\pi^*(\omega_A \otimes \det E ) \otimes \mathcal O_X(-\rank E) = 
\pi^*L^{-1} \otimes \mathcal O_X(-2)$ 
(see \cite[7.3.A]{Laz}). 

We will see that $\omega_X^{-1}$ is big.
Let $\xi$ and $l$ be the numerical classes of $\mathcal O_X(1)$ 
and $L$, respectively. 
Note that $\xi$ is an effective class since $H^0(X, \mathcal O_X(1) ) = 
H^0(A, E) \neq 0$. 
The numerical class of $\omega_X^{-1}$ is equal to 
$$ 2\xi + \pi^*l = \frac{N-1}{N}2\xi +\frac{1}{N}2\xi + \pi^*l,$$ 
where $N$ is a sufficiently large integer such that 
$(1/N)2\xi + \pi^*l$ is ample. 
So the numerical class of $\omega_X^{-1}$ is represented by 
the sum of an effective class and an ample class. 
Therefore $\omega_X^{-1}$ is big.
\end{proof}

\begin{ex}\label{ex4.5}
Let $X$ be an irregular smooth projective variety of dimension $n \geq 2$ 
with big anti-canonical bundle 
and let $f = \alb _X: X \to A$ be the Albanese morphism of $X$.  
Now we show that $R^jf_*\omega_X^{\otimes m}$ is not a GV-sheaf 
for some positive integers $j$ and $m$. 

Let $\mathcal P _A$ be the Poincar\'e bundle on $A \times \hat{A}$ 
and $\mathcal P _X = (f \times \id _{\hat{A}})^*\mathcal P_A$. 
Then we have the Fourier--Mukai transforms 
$\Phi_{\mathcal P_A} : \Coh (A) \to \Coh (\hat{A})$ and 
$\Phi_{\mathcal P_X} : \Coh (X) \to \Coh (\hat{A})$ 
(see \ref{say2.2}). 
It immediately follows that $\Phi_{\mathcal P_X} \cong \Phi_{\mathcal P_A} \circ f_* $. 

Now we prove the following lemma:

\begin{lem}\label{lem4.6}
Let $X$ be a smooth projective variety of dimension $n$ 
and $D$ be a big Cartier divisor on $X$. 
Then $$S^0(mD) = \{ \xi \in \Pic ^0(X) \mid 
H^0(X, \mathcal O_X(mD + \xi)) \neq 0 \} = \Pic ^0(X)$$
for any sufficiently large and divisible $m$.
\end{lem}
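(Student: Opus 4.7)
My plan is to combine Kodaira's lemma with a uniform Serre vanishing over $\Pic^0(X)$.

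First, fix an ample Cartier divisor $A_0$ on $X$. Since $D$ is big, Kodaira's lemma yields a positive integer $m_0$ and an effective divisor $E_0$ such that $m_0 D \sim A_0 + E_0$. Then for every positive integer $k$ we have $km_0 D \sim kA_0 + kE_0$, and for any $\xi \in \Pic^0(X)$ multiplication by the canonical section of $\mathcal{O}_X(kE_0)$ gives an inclusion
$$ H^0(X, \mathcal{O}_X(kA_0) \otimes \xi) \hookrightarrow H^0(X, \mathcal{O}_X(km_0 D) \otimes \xi). $$
So it suffices to show that $H^0(X, \mathcal{O}_X(kA_0) \otimes \xi) \neq 0$ for every $\xi \in \Pic^0(X)$ and all sufficiently large $k$.

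To establish this uniform nonvanishing, I would consider the Poincar\'e line bundle $\mathcal{P}$ on $X \times \Pic^0(X)$ and, letting $p \colon X \times \Pic^0(X) \to \Pic^0(X)$ denote the second projection, the line bundle
$$ \mathcal{L}_k \;=\; \bigl(\mathcal{O}_X(kA_0) \boxtimes \mathcal{O}_{\Pic^0(X)}\bigr) \otimes \mathcal{P}. $$
Since $\mathcal{O}_X(A_0)$ is $p$-ample and $\Pic^0(X)$ is projective, relative Serre vanishing furnishes an integer $k_0$ such that $R^i p_* \mathcal{L}_k = 0$ for every $i > 0$ and every $k \geq k_0$. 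By cohomology and base change this yields $H^i(X, \mathcal{O}_X(kA_0) \otimes \xi) = 0$ for all $i > 0$, $k \geq k_0$, and $\xi \in \Pic^0(X)$. Because Euler characteristics are constant in the flat family $\{\mathcal{O}_X(kA_0) \otimes \xi\}_{\xi \in \Pic^0(X)}$, we conclude
$$ h^0(X, \mathcal{O}_X(kA_0) \otimes \xi) \;=\; \chi(X, \mathcal{O}_X(kA_0) \otimes \xi) \;=\; \chi(X, \mathcal{O}_X(kA_0)), $$
which is positive for $k$ large since it grows like $k^n$ by asymptotic Riemann--Roch.

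Combining the two steps, whenever $m$ is a sufficiently large multiple of $m_0$, every $\xi \in \Pic^0(X)$ lies in $S^0(mD)$, so $S^0(mD) = \Pic^0(X)$. The only substantive ingredient beyond Kodaira's lemma is the uniform Serre vanishing over the projective base $\Pic^0(X)$, which is standard, so I do not expect any genuine obstacle beyond setting up the Poincar\'e family correctly.
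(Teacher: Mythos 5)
Your proof is correct, and its first half (Kodaira's lemma writing $m_0D \sim A_0 + E_0$ with $A_0$ ample and $E_0$ effective, then reducing to uniform nonvanishing of $H^0(X, \mathcal O_X(kA_0)\otimes\xi)$) is exactly the reduction in the paper. Where you diverge is in how you establish that uniform nonvanishing. The paper stays with a very ample $H$, chooses $m_1$ so that $m_1H - K_X$ is ample (so Kodaira vanishing kills all higher cohomology of $mH+\xi$ for $m \geq m_1$, uniformly in $\xi$ since $\xi$ is numerically trivial), and then invokes Castelnuovo--Mumford regularity to conclude that $mH+\xi$ is $0$-regular, hence globally generated, for all $m \geq m_1+n$; global generation gives $H^0 \neq 0$ with an explicit threshold. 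You instead get the uniform higher-cohomology vanishing from relative Serre vanishing for the Poincar\'e family over $\Pic^0(X)$ together with cohomology and base change, and then conclude $h^0 = \chi(\mathcal O_X(kA_0)\otimes\xi) = \chi(\mathcal O_X(kA_0)) > 0$ from constancy of $\chi$ in the flat family and asymptotic Riemann--Roch. Both routes are sound; the paper's argument yields the slightly stronger conclusion of global generation with an explicit bound $m_1+n$ tied to a regularity computation, while yours is arguably more self-contained at the level of output (you only need $h^0>0$, and $\chi$-constancy plus $\chi \sim (A_0^n)k^n/n!$ delivers exactly that) at the cost of a non-effective threshold $k_0$ from relative Serre vanishing. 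Neither difference matters for the application in Example \ref{ex4.5}.
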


\begin{proof}
Since $D$ is big, there exist a positive integer $m_0$, 
a very ample Cartier divisor $H$, and an effective Cartier divisor $E$ such that 
$m_0D \sim H + E$. 
For any positive integer $m$, we have  
$$S^0(mm_0D) = S^0(mH + mE) \supset S^0(mH).$$
We can take a positive integer $m_1$ satisfying that
$$H^i(X, \mathcal O_X(mH+ \xi)) = 0$$
for every $\xi \in \Pic ^0(X)$, $m \geq m_1$ and $i > 0$
(take $m_1$ such that $m_1H-K_X$ is ample). 
According to the notion of the Castelnuovo--Mumford regularity, 
$mH + \xi$ is 0-regular for every $\xi \in \Pic ^0(X)$ and $m \geq m_1 + n$, 
and so it is globally generated. 
In particular, $S^0(mH) = \Pic ^0(X)$ for every $m \geq m_1 + n$. 
Therefore $S^0(mm_0D) = \Pic ^0(X)$ for every $m \geq m_1 + n$.
\end{proof}

By the above lemma, we can take a positive integer $m$ such that 
$S^n(\omega_X ^{\otimes m}) =-S^0(\omega_X ^{\otimes (1-m)} ) = \Pic ^0(X)$. 
Then it follows that 
$$ \Supp R^n\Phi_{\mathcal P _X}(\omega_X^{\otimes m})= \Pic ^0(X).$$
Consider the Grothendieck spectral sequence 
\[ E^{i, j}_2 = R^i\Phi_{\mathcal P_A} R^jf_*(\omega_X^{\otimes m}) 
\Rightarrow R^{i + j}\Phi_{\mathcal P_X}(\omega_X^{\otimes m}).
\]
Then there exists an integer $i$ such that 
$\Supp R^i\Phi_{\mathcal P_A} R^{n-i}f_*(\omega_X^{\otimes m}) = \Pic ^0(X)$. 
Note that $\dim f(X) > 0$ since $X$ is irregular. 
Therefore $R^nf_*(\omega_X^{\otimes m}) = 0$, and so $i$ must be positive. 
Note that a coherent sheaf $\mathcal F$ on $A$ is a GV-sheaf 
if and only if $\codim \Supp R^j\Phi_{\mathcal P_A} \mathcal F 
\geq j $ for $j > 0$ (see \ref{say2.3}).  
Hence it follows that 
$R^{n-i}f_*(\omega_X^{\otimes m})$ is not a GV-sheaf. 
\end{ex}

\begin{rem}\label{rem4.7}
Let $X$ be a smooth projective variety of dimension $k$, 
$f: X \to Y$ be a morphism to a 
projective variety $Y$ of dimension $n$, $L$ be an ample and globally generated 
line bundle on $Y$ and $m > 0$, $j \geq 0$ be integers.  
Suppose that we can take a positive integer $N = N(k, n, m, j)$ depending only on 
$k, n, m$ and $j$ such that 
$$H^i(Y, R^jf_*\omega_X^{\otimes m} \otimes L^{\otimes l}) = 0$$ 
for any $i>0$ and $l \geq N$. 
Then, by the same argument as the proof of Theorem \ref{thm_PoSc} by using 
Theorem \ref{thm_PoSc2} (see the proof of \cite[Theorem 1.10]{PoSc}), 
it follows that $R^jf_*\omega_X^{\otimes m}$ is a GV-sheaf for any morphism 
$f: X \to A$ to an abelian variety, $j \geq 0$ and $m \geq 1$. 
But this contradicts Example \ref{ex4.5}. 
So we can not take such an integer $N$. 
\end{rem}

Next, we consider the case when $X$ is a surface. 
Example \ref{ex4.5} shows that 
$R^jf_*(\omega_X^{\otimes m})$ is not always a GV-sheaf
when $\kappa(X) = - \infty$. 
On the other hand, the following proposition holds:

\begin{prop}\label{prop4.8}
Let $X$ be a smooth projective surface and $f: X \to A$ be a morphism 
to an abelian variety. 
Assume that $\kappa(X) \geq 0$. 
Then $R^jf_*(\omega_X^{\otimes m})$ is a GV-sheaf for any $j$ and $m$.
\end{prop}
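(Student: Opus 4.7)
The plan is a two-step reduction: first to the minimal model of $X$, then an appeal to Proposition \ref{prop4.2}.

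In the first step, I use that the abelian variety $A$ contains no rational curves, so $f$ contracts every $(-1)$-curve on $X$ and hence factors through the contraction $\phi\colon X \to X_{\min}$ to the minimal model as $f = f_{\min}\circ\phi$, where $X_{\min}$ is a smooth projective surface with $K_{X_{\min}}$ nef. Writing $K_X = \phi^*K_{X_{\min}} + E$ with $E$ effective and $\phi$-exceptional, the projection formula together with $\phi_*\mathcal{O}_X(mE) = \mathcal{O}_{X_{\min}}$ gives $\phi_*\omega_X^{\otimes m} = \omega_{X_{\min}}^{\otimes m}$, while for $i>0$ the sheaf $R^i\phi_*\omega_X^{\otimes m}$ is supported on the (zero-dimensional) image in $X_{\min}$ of the exceptional locus and is therefore a skyscraper sheaf. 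The Leray spectral sequence for $f = f_{\min}\circ\phi$ then relates $R^jf_*\omega_X^{\otimes m}$ to $R^jf_{\min,*}\omega_{X_{\min}}^{\otimes m}$ only up to skyscraper corrections on $A$: an equality when $j=0$, an extension by a subsheaf of a skyscraper when $j=1$, and a quotient by a skyscraper image when $j=2$. Since skyscraper sheaves are trivially GV and the class of GV-sheaves is closed under both extensions by and quotients by skyscraper sheaves (by inspection of the long exact sequence in cohomology, using that $H^i$ of a skyscraper twisted by any $\xi \in \Pic^0(A)$ vanishes for $i>0$), it then suffices to prove the statement when $X = X_{\min}$.

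In the second step, assume $X$ is a minimal smooth projective surface with $K_X$ nef. By the Abundance Theorem for surfaces, $K_X$ is semiample, so $|NK_X|$ is base-point-free for some positive integer $N$. The plan is to construct a boundary $\mathbb{Q}$-divisor $\Delta$ on $X$ with simple normal crossing support such that $\Delta\sim_{\mathbb{Q}}(m-1)K_X$; Proposition \ref{prop4.2} applied with the Cartier divisor $D = mK_X$ (which satisfies $D\sim_{\mathbb{Q}}K_X+\Delta$ and $\mathcal{O}_X(D)=\omega_X^{\otimes m}$) then concludes the proof. If $\kappa(X)=0$, then $K_X$ is a torsion line bundle, so $(m-1)K_X\sim_{\mathbb{Q}}0$ and I take $\Delta=0$. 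If $\kappa(X)\geq 1$, then $|NK_X|$ is positive-dimensional, so by Bertini's theorem I choose general smooth pairwise-transverse members $D_1,\dots,D_k \in |NK_X|$ with $k=\lceil (m-1)/N\rceil$ and set $\Delta=\sum_{i=1}^{k}a_iD_i$ with rational $a_i\in(0,1]$ satisfying $\sum_i a_i = (m-1)/N$.

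The main obstacle is Step 1: identifying the Leray spectral sequence terms contributing to $R^jf_*\omega_X^{\otimes m}$ for $j=1,2$, checking that everything besides $R^jf_{\min,*}\omega_{X_{\min}}^{\otimes m}$ is a skyscraper (relying crucially on $R^2\phi_* = 0$ since $\phi$ has at most one-dimensional fibers), and then verifying the closure properties of GV-sheaves under extensions and quotients by skyscrapers. Once this is in hand, Step 2 is essentially formal: abundance on surfaces provides the semiample model, Bertini produces the pairwise-transverse members, and Proposition \ref{prop4.2} finishes the proof.
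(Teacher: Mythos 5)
Your proof is correct, and the first half coincides with the paper's: the paper also factors $f$ through the contraction $\varepsilon\colon X\to X'$ of $(-1)$-curves (noting the exceptional curves are contracted by $f$), runs the spectral sequence $R^if'_*R^j\varepsilon_*(\omega_X^{\otimes m})\Rightarrow R^{i+j}f_*(\omega_X^{\otimes m})$, and isolates the skyscraper term $f'_*R^1\varepsilon_*(\omega_X^{\otimes m})$; the paper writes the $j=1$ term as a direct sum where you more cautiously record only an extension, but either suffices once you observe that skyscrapers contribute nothing to $H^i\otimes\xi$ for $i>0$. The second half is where you genuinely diverge. The paper reduces to $R^1f_*$ alone (quoting Theorem \ref{thm4.1} for $f_*$ and killing $R^2f_*$ by fiber dimension), then verifies Hacon's criterion (Theorem \ref{thm4.3}) by hand: for each isogeny $\phi\colon B\to A$ it base-changes to $Y=X\times_AB$, uses $\kappa(Y)\geq 0$ and semi-ampleness of $\omega_{X'}$, and applies the Ambro--Fujino vanishing theorem directly to $\omega_{X'}^{\otimes m}$. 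You instead use abundance to manufacture an explicit simple normal crossing boundary $\Delta\sim_{\mathbb Q}(m-1)K_{X_{\min}}$ from general members of $|NK_{X_{\min}}|$ (with the $\kappa=0$ torsion case handled separately), so that $mK_{X_{\min}}\sim_{\mathbb Q}K_{X_{\min}}+\Delta$ and Proposition \ref{prop4.2} applies as a black box. This buys you two things: the isogeny/base-change argument is not repeated because Proposition \ref{prop4.2} already encapsulates it, and all $j$ (including $j=0$) are treated uniformly without invoking the Popa--Schnell theorem. The only points needing care in your route --- that for $\kappa=1$ the general members of $|NK_{X_{\min}}|$ are reduced disjoint unions of smooth fibers (avoid multiple fibers), and that for $\kappa=2$ general members are pairwise transverse with no triple points --- are standard Bertini-type facts, so I see no gap.
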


\begin{proof}
This is obvious when $\dim f(X) =0$. 
So we may assume that $\dim f(X) \geq 1$. 
Fix a positive integer $m$. 
$f_*(\omega_X^{\otimes m})$ is a GV-sheaf by Theorem \ref{thm4.1}. 
$R^2f_*(\omega_X^{\otimes m}) =0$ since all fibers of $f$ have dimension less than 2. 
So it is sufficient to show that $R^1f_*(\omega_X^{\otimes m})$ is a GV-sheaf. 

Since $\kappa(X) \geq 0$, we can take a series of contractions of $(-1)$-curves 
$\varepsilon :X \to X'$ such that the canonical bundle $\omega_{X'}$ of $X'$ is semi-ample. 
Then the exceptional curves of $\varepsilon$ are also contracted by $f$. 
Hence we obtain a morphism $f':X' \to A$ such that $f' \circ \varepsilon = f$. 
Consider the spectral sequence 
$$ E^{i,j}_2= R^if'_*R^j\varepsilon_*(\omega_X^{\otimes m})
\Rightarrow R^{i+j}f_*(\omega_X^{\otimes m}).$$
Then $E^{i,j}_2 =0$ except that $(i,j)=(0,0), (1,0), (0,1)$. 
So we have 
\begin{align*}
R^1f_*(\omega_X^{\otimes m}) 
&\cong R^1f'_* \varepsilon_*(\omega_X^{\otimes m}) 
\oplus f'_* R^1\varepsilon_*(\omega_X^{\otimes m})
\\ &\cong R^1f'_*(\omega_{X'}^{\otimes m})
\oplus f'_* R^1\varepsilon_*(\omega_X^{\otimes m}).
\end{align*}
Take any ample line bundle $L$ on $A$. 
We have 
$$H^k(A, R^1f'_*(\omega_{X'}^{\otimes m}) \otimes L)=0\ \mathrm{for}\ k>0$$ 
by the Ambro--Fujino vanishing theorem 
since $\omega_{X'}$ is semi-ample (see \cite[Theorem 3.2]{Amb} and 
\cite[Theorem 6.3]{Fuj1}). 
It is obvious that 
$$H^k(A, f'_* R^1\varepsilon_*(\omega_X^{\otimes m}) \otimes L)=0\ \mathrm{for}\ k>0$$ 
since $\dim \Supp R^1\varepsilon_*(\omega_X^{\otimes m}) = 0$. 
Thus it follows that 
$$H^k(A, R^1f_*(\omega_X^{\otimes m}) \otimes L)=0\ \mathrm{for}\ k>0.$$

We show by using Theorem \ref{thm4.3} that 
$R^1f_*(\omega_X^{\otimes m})$ is a GV-sheaf. 
Let $\phi:B \to A$ be an isogeny and $L$ be an ample line bundle on $B$. 
Set $Y = X \times_A B$. 
Let $\psi:Y \to X$ and $g:Y \to B$ be the natural morphisms. 
Then $\psi$ is \'etale and $\kappa(Y) \geq 0$. 
By base change, 
$$\phi^*R^1f_*(\omega_X^{\otimes m}) \cong R^1g_*\psi^*(\omega_X^{\otimes m}) 
\cong R^1g_*(\omega_Y^{\otimes m}).$$
Hence 
$$H^k(B, \phi^*R^1f_*(\omega_X^{\otimes m}) \otimes L) 
\cong H^k(B,R^1g_*(\omega_Y^{\otimes m}) \otimes L).$$
Here $H^k(B,R^1g_*(\omega_Y^{\otimes m}) \otimes L)=0$ for $k>0$ 
by the above argument. 
Therefore $H^k(B, \phi^*R^1f_*(\omega_X^{\otimes m}) \otimes L)=0$
for $k>0$. 
Then it follows by Theorem \ref{thm4.3} that 
$R^1f_*(\omega_X^{\otimes m})$ is a GV-sheaf. 
\end{proof}

Taking the above proposition into account, 
we modify the question of Popa--Schnell as follows:

\begin{ques}\label{ques4.9}
Let $X$ be a smooth projective variety of dimension $n \geq 3$ 
and $f: X \to A$ be a morphism to an abelian variety. 
Assume that $\kappa(X) \geq 0$. 
Then is $R^jf_*(\omega_X^{\otimes m})$ a GV-sheaf for any $j$ and $m$?
\end{ques}


\end{document}